\newcommand{\simpAr}[2][r]{%
%\ar@{}[#1]|-*[@]_{\csname rot#1\endcsname{#2}}%
\ar@{}[#1]|-*[@]_{#2}%
}
\renewcommand{\colorlinks}{true}
\renewcommand{\linkcolor}{lblue}
\renewcommand{\citecolor}{lblue}
\renewcommand{\urlcolor}{dblue}
\renewcommand{\linkbordercolor}{red}
\renewcommand{\citebordercolor}{green}
\renewcommand{\urlbordercolor}{cyan}
\def\VF{\mathrm{VF}}
\newcommand{\RF}{{\rm RF}}
\def\WF{\mathrm{WF}}
\renewcommand{\O}{\mathcal{O}}
\newcommand{\NextVer}[1]{}
\newcommand{\cCexp}{\cC^{\mathrm{exp}}}
\newcommand{\cCexpL}{\cC^{\mathrm{exp}}_\cL}
\newcommand{\Loc}{{\mathrm{Loc}}}
\def\Supp{\operatorname{Supp}}
\def\cLan{{\cL_{\rm an}^F}}
\def\11{{\mathbf 1}}
\def\AA{{\mathbb A}}
\def\CC{{\mathbb C}}
\def\FF{{\mathbb F}}
\def\NN{{\mathbb N}}
\def\QQ{{\mathbb Q}}
\def\RR{{\mathbb R}}
\def\ZZ{{\mathbb Z}}
\def\cC{{\mathscr C}}
\def\cF{{\mathcal F}}
\def\cL{{\mathcal L}}
\def\cM{{\mathcal M}}
\def\cO{{\mathcal O}}
\def\cS{{\mathcal S}}
\def\llp{\mathopen{(\!(}}
\def\rrp{\mathopen{)\!)}}
\theoremstyle{plain}
\numberwithin{equation}{subsection}
\DeclareMathOperator*{\dims}{dimSupp}
\newcommand{\ord}{\operatorname{ord}}
\newcommand{\zerodel}{.\kern-\nulldelimiterspace}
\begin{document}

\author%[Aizenbud]
{Avraham Aizenbud}
\address{Avraham Aizenbud,
Faculty of Mathematical Sciences,
Weizmann Institute of Science,
%76100
Rehovot, Israel}
\email{aizenr@gmail.com}
\urladdr{http://aizenbud.org}

\author%[Cluckers]
{Raf Cluckers}

\address{CNRS, Univ.~Lille,  UMR 8524 - Laboratoire Paul Painlev\'e, F-59000 Lille, France, and
KU Leuven, Department of Mathematics, B-3001 Leu\-ven, Bel\-gium}
\email{Raf.Cluckers@univ-lille.fr}
\urladdr{http://rcluckers.perso.math.cnrs.fr/}

\date{\today}

\subjclass[2010]{Primary 14E18; Secondary 22E50, 03C10, 11S80, 11U09}

\keywords{Distributions for non-archimedean local fields, WF-holonomic distributions, wave front sets, distributions of $Cexp$-class, micro-local analysis, micro-locally smooth locus, motivic constructible exponential functions, loci of motivic exponential class, embedded resolution of definable functions, uniform $p$-adic integration, Fourier transforms, $p$-adic continuous wavelet transform, Denef-Pas cell decomposition, Transfer principles for motivic integrals, alterations, $D$-modules}

%
%\classification{22E,22E45,20G05,20G25,46F99}
%
%20-xx  Group theory and generalizations 20Cxx Representation
%theory of groups 20C99 None of the above, but in this section
%
%20Gxx Linear algebraic groups (classical groups) 20G05
%Representation theory 20G25 Linear algebraic groups over local
%fields and their integers
%
%
%22-xx  Topological groups, Lie groups 22Exx Lie groups 22E45
%Representations of Lie and linear algebraic groups over real
%fields: analytic methods {For the purely algebraic theory, see
%20G05}
%
%
%46-xx  Functional analysis 46Fxx Distributions, generalized
%functions, distribution spaces 46F10 Operations with distributions
%
%14-xx  Algebraic geometry 14Lxx Algebraic Groups 14L24 Geometric
%invariant theory [See also 13A50] 14L30 Group actions on varieties
%or schemes (quotients) [See also 13A50, 14L24]

%\date{\today}
\title[WF-holonomicity of $p$-adic distributions]{WF-holonomicity of ${\mathscr {C}}^{\mathrm{exp}}$-class distributions on non-archimedean local fields}
\begin{abstract}
In the context of geometry and analysis on non-archimedean local fields, we study two recent notions,  $\cCexp$-class distributions from \cite{CHLR} and WF-holonomicity from \cite{AizDr}, and we show that any distribution of $\cCexp$-class is WF-holonomic. Thus we answer a question from \cite{AizDr} by providing a framework of WF-holonomic distributions for non-archimedean local fields which is stable under taking Fourier transforms and which contains many natural distributions, in particular, the distributions studied in \cite{AizDr}. We further show that one can regularize distributions without leaving the $\cCexp$-class. Finally, we show a close link between zero loci and smooth loci for functions and distributions of $\cCexp$-class, by proving a converse to a result of \cite{CHLR}. A key ingredient is a new resolution result for subanalytic functions (by alterations), based on embedded resolution for analytic functions and model theory.
%\RamiQ{I included the word resolution here since we l use it in the body of the paper }
%Further, for fixed nonarchimedean local field $F$ of characteristic zero, we show that for any zero locus $Z$ of a $\cCexp$-class function such that $Z$ is moreover open and dense, there is a distribution of $\cCexp$-class whose smooth locus equals $Z$. Together with the converse result proved in \cite{CHLR}, this exhibits yet another role played by zero loci of $\cCexp$-class functions. % are playing many roles.
%Together with the converse result proved in \cite{CHLR}, this shows once again that zero loci of $\cCexp$-class functions are playing many roles.
\end{abstract}

\maketitle
%\setcounter{tocdepth}{1}
%\tableofcontents

\section{Introduction}

\subsection{}
A key missing tool in the combination of $p$-adic geometry and analysis, is the interplay between differentiation and integration. In more advanced wordings, techniques related to
Bernstein polynomials and $D$-modules, which in the reals give a plentitude of results, seem to break down when one wants to use them in a $p$-adic context. In the real and complex setting, one thinks for example of the strong link between eigenvalues of monodromy with zeros of Bernstein polynomials and with poles of certain real integrals,  a link which remains elusive in the $p$-adic setting, see e.g.~the monodromy conjecture \cite[Chapter 1, Section 3.4]{Cham-Nica-Seba}, \cite{DenefBour}, \cite{Igusa:intro}. One also thinks of the link of $D$-modules on $\RR^n$ with distributions on $\RR^n$ and how their holonomicity is preserved under Fourier transform.  In this paper we investigate a notion of holonomicity in the $p$-adic setting, not for $D$-modules but directly for distributions on $p$-adic analytic manifolds, and its behavior under Fourier transform. Two recent contributions in this domain are combined: the notion of WF-holonomicity of \cite{AizDr} for distributions, and, the notion of distributions of $\cCexp$-class on (definable) $p$-adic manifolds from \cite{CHLR}. Our main result addresses a question from \cite{AizDr} and provides a framework of WF-holonomic distributions on $\QQ_p^n$ which is stable under Fourier transform  (and which contains the distributions studied in \cite{AizDr}), see Theorem \ref{thm:main} for the holonomicity and \cite[Theorem 3.3.5]{CHLR} for the stability under Fourier transform.

\subsection{}
The notion of  WF-holonomicity of a distribution $\xi$, introduced in \cite{AizDr}, is based on the wave front set $\WF(\xi)$ of $\xi$. So to say, the nicest distributions are the ones which come from integrating the product of a test function with a smooth (that is, $C^\infty$) density function. The wave front set of $\xi$ on a manifold $X$ is a subset of the co-tangent bundle of $X$, and it sees how far away $\xi$ is from a smooth density function, roughly by looking at decay when nearing infinity and after Fourier transformation. If the wave front set $\WF(\xi)$ is small enough, then $\xi$ is called WF-holonomic.
In more detail, if $\WF(\xi)$ is contained in a finite union of co-normal bundles of submanifolds of $X$, then $\xi$ is called WF-holonomic, see definition \ref{def:WF-hol}.

\subsection{}
In \cite{CHLR} the notion of distributions of $\cCexp$-class on $p$-adic manifolds is introduced. Stability of this class is shown under operations like Fourier transforms, pull-backs\footnote{The definition of Heifetz \cite{Hei85a} for pull-backs is made precise in \cite{CHLR} by specifying topologies on distributions in relation to their wave front sets, similar to the real case in \cite{Hormander71}. This specification also applies to Proposition 2.3.10 of \cite{AizDr}.}, and push-forwards.
%\subsection{}
In this paper we show stability in a new sense, namely under regularization: any distribution of $\cCexp$-class on $U$ can be regularized to a distribution on $X$ which is still of $\cCexp$-class, where $U$ is open in the $p$-adic manifold $X$, see Theorem \ref{thm:reg:F:one}.
%\blue{(( Can we not just assume more generally $U$ to be open in $X$? ))}
Not only is the $\cCexp$-class of distributions stable under all these  operations, this class contains many natural distributions, like the ones studied in \cite{AizDr}. Distributions of $\cCexp$-class have a geometric flavour %\RamiQ{Why the holonomisity related to geometric flavor?}
as reflected by the main result of this paper on  WF-holonomicity, and by their definition based on model theory.

\subsection{}
The notion of distributions of $\cCexp$-class is tightly linked to the notion of $\cCexp$-class functions, grosso modo via the continuous wavelet transform. For functions of $\cCexp$-class, the zero loci have played important roles in transfer principles (to change the characteristic of the local field) and in the description of geometric and analytic objects, see \cite{CGH} \cite{CGH2} \cite{CGH5} \cite{CHLR}.  In  \cite{CHLR} it is shown that the smooth locus (and even the micro-locally smooth locus) of a distribution of $\cCexp$-class equals the zero locus of a function of $\cCexp$-class.
We show the following converse: for any zero locus $Z(g)$ of a $\cCexp$-class function $g$ on a manifold $X$ such that $Z(g)$ is moreover open and dense in $X$, there is a distribution of $\cCexp$-class whose smooth locus equals $Z(g)$, see Theorem \ref{prop:conv}.  Together with the results \cite[Theorems 3.4.1, 4.1.2]{CHLR}, this exhibits yet another complete role played by zero loci of $\cCexp$-class functions. An analogous converse for the micro-locally smooth locus remains for the future to be discovered.
%Further, for fixed nonarchimedean local field $F$ of characteristic zero, we show that for any zero locus $Z$ of a $\cCexp$-class function such that $Z$ is moreover open and dense, there is a distribution of $\cCexp$-class whose smooth locus equals $Z$. Together with the converse result proved in \cite{CHLR}, this exhibits yet another role played by zero loci of $\cCexp$-class functions.

%Natural associated objects like wave front sets are studied, uniformly in local fields of characteristic zero and of large positive characteristic, inspired by the $p$-adic case of \cite{Hei85a} and the real case of \cite{Hormander71}.

\subsection{}
A key ingredient in the proofs consists of a (new variant of a) resolution result for definable functions which creates monomials times units, but which allows finite fibers, see Theorem \ref{thm:res:def}. However, this variant does not directly allow a reduction to a Cartesian product situation when proving the WF-holonomicity from Theorem \ref{thm:main}. Indeed, an additive character evaluated in a unit times a a quotient of monomials is not at all a Cartesian product situation. However, via general properties of distributions and their wave front sets, we manage to proceed by induction on the dimension. By the finite fibers, our resolution maps are similar to alterations and remind of smoothing of real subanalytic sets as in \cite{BiersParu}.

\subsection{}
Typically, the results of this paper and of \cite{AizDr}, \cite{CHLR} hold uniformly throughout all $p$-adic fields, and, in (definable) family settings.
For simplicity of notation, a large part of the paper will be formulated for a fixed non-archimedean local field $F$ which is often assumed to be of characteristic zero, and a fixed nontrivial additive character $\psi$ on $F$. We will also state uniformity in the local field (including local fields of positive but large characteristic) and family aspects of most of our results. See for example Theorem \ref{thm:reg:fam} for a  family variant of the regularisation result. Very recently, some results of \cite{Hei85a} and \cite{CHLR} are presented in a motivic framework instead of (uniform) $p$-adic, see \cite{Raib-motivic}.
\subsection{}
The motivation for this line of research lies in part in the search for $p$-adic analogues for the strong interplay between real analysis and real geometry. Another challenge came from more global geometrical aspects than usually dealt with in model theory.
Indeed, wave front sets for distributions on a manifold $X$ cannot be seen by working piecewise on $X$ (unless the pieces are clopen), while most results on definable sets and functions are piecewise in nature. This represented the challenge to this project to combine a global geometric with a definable viewpoint. %at the same time.

%For example, the notion of holonomic D-modules is stable under Fourier transform, but on non-archimedean local fields, we have to proceed differently and we get WF-holonomicity and stability under Fourier transform for distributions of $\cCexp$-class (and not so for more general distributions).
\subsection{Structure of the paper}
In Section \S\ref{s:hol} %and \ref{sec:fixed:F}
we recall the relevant terminogy and formulate the main result of the paper when working over fixed local field of characteristic $0$. In \S\ref{s:skech} we explain the main ingredients of all the proofs in the paper, with all the essential parts and omitting technical details.
In \S \ref{s:reg}-\S \ref{sec:loci} we proof all the result for fixed local filed of characteristic $0$. In \S \ref{sec:unif} we explane how to deduce uniform versions of those results when we vary the local field and allow it to have positive (but high enough) characteristic.

\subsection*{Acknowledgments}
\hspace{0.5cm}
This project was conceived while both authors participated in the Fourth International Workshop on Zeta Functions in Algebra and Geometry. We thank the organizers of the conference for creating this opportunity. R.C.~also thanks I. Halupczok and M.~Raibaut for interesting discussions on the topics of this paper.

A.A. was partially supported by ISF grant 687/13, ISF grant 249/17, and a Minerva foundation grant. R.C. was partially supported by the European Research Council under the European Community's Seventh Framework Programme (FP7/2007-2013) with ERC Grant Agreement nr. 615722
MOTMELSUM and KU Leuven IF C14/17/083, and thanks the Labex CEMPI  (ANR-11-LABX-0007-01).

\section{Wave front sets and holonomicity: definitions and main results}\label{s:hol}

\subsection{}\label{sec:all:F}
Let $F$ be any non-archimedean local field, namely, a finite field extension of $\QQ_p$ or of $\FF_p\llp t \rrp$ for some prime $p$.
Let $\cO_F$ denote the valuation ring of $F$ with maximal ideal $\cM_F$ and residue field $k_F$ with $q_F$ elements and characteristic $p_F$. We identify the value group of $F$ with $\ZZ$. We write $\ord$ for the valuation map, and $|x|$ for the norm of $x\in F$, defined as $q_F^{-\ord x}$ for nonzero $x$ and $|0|=0$.

Let $X$ be an analytic submanifold of $F^n$ of dimension $m$ for some $n\geq m\geq 0$,
see \cite{Bour} where this is called $F$-analytic instead of analytic, and see \cite[Section 2.1]{CHLR} for the more general notion of strict $C^1$ submanifolds. As in \cite[Section 2.1]{CHLR}, we will always assume that our analytic manifolds are smooth, nonempty, and of pure dimension, meaning of constant local dimension $m$ for some integer $m\geq 0$.
Define the tangent bundle $TX$ and the co-tangent $T^*X$ bundle of $X$ as usual (see \cite[Section 2.1]{CHLR}). That is, $T^*X $ is the co-tangent bundle which at $x\in X$ is the dual of the tangent space to $X$ at $x$. By the wave front set $\WF(\xi)$ of a distribution $\xi$ on $X$, we mean %the subset of $T^*X$ which is the union of the zero section of $T^*X$ with
the $F^\times$-wave front set in $T^*X\smallsetminus X\times\{0\}$ in the sense of \cite[2.8.6]{CHLR} (based on Heifetz \cite{Hei85a}). (Note that Definition A.0.1 of \cite{AizDr} of wave front sets is slightly different since it includes the zero section on the support of $\xi$; this is a harmless difference.)  Let us recall these definitions. By a Schwartz-Bruhat function is meant a $\CC$-valued locally constant function with compact support. The $\CC$-vector space of Schwartz-Bruhat functions on $X$ is denoted by $\cS(X)$. A distribution on $X$ is nothing else than a linear function $\cS(X)\to\CC$ (as usual in the $p$-adic case, there are no topological requirements).  We write $\cS^*(X)$ for the $\CC$-vector space of distributions on $X$. The support of a distribution is meant as usual, see e.g.~\cite[Definition 2.4.3 ]{CHLR}.

\begin{defn}[Wave front sets]\label{Hei:smooth}
Let $U\subset F^n$ be open and let $\xi$ be a distribution on $U$. Let $(x_0,y_0)$ be in $T^*U \smallsetminus U\times\{0\}= U\times (F^n\smallsetminus\{0\})$.  Say that $\xi$ is {micro-locally smooth} at $(x_0,y_0)$ if there are open neighborhoods $U_0$ of $x_0$ and $V_0$ of $y_0$ such that for any Schwartz-Bruhat function $\varphi$ with support contained in $U_0$ there is an integer $N$ such that for all $\lambda\in F^\times$ with $|\lambda| > N$ one has
\begin{equation}\label{Hei:p}
\cF(\varphi \xi) (\lambda\cdot y) = 0 \mbox{ for all $y$ in $V_0$}.
\end{equation}
Here,
the Fourier transform $\cF(\varphi \xi)$  of the product of $\varphi$ with $\xi$ is a continuous function (see Theorem 2.5.2 of \cite{CHLR}) and $\lambda\cdot y$ stands for $(\lambda y_1,\ldots,\lambda y_n)$.
%\RamiQ{the Fourier transform of a distribution is a generalized function. in this case this generalized function hapens to be a smooth one so I chinge the sentence a bit.}
The wave front of $\xi$ is defined as the complement in $T^*U \smallsetminus U\times\{0\}$ of the set of micro-locally smooth points of $\xi$ and is denoted by $\WF(\xi)$.
More generally, for $\xi$ a distribution on an analytic submanifold $X$ of $F^n$ of dimension $m$, the wave front set of $\xi$ is defined by using analytic charts on $X$ (see  \cite[2.8.6]{CHLR} with $\Lambda=F^\times$).
\end{defn}

Define the co-normal bundle
$$
CN_Y^X \subset T^*(X)
$$
of an %nonempty
analytic submanifold $Y\subset X$ as usual, see \cite[Section 2.1]{CHLR}. That is, the co-normal bundle $CN_Y^X$ is the dual bundle of the normal bundle $N_Y^X$, where $N_Y^X$ at $y\in Y$ is the quotient of the tangent space to $X$ at $y$ by the tangent space to $Y$ at $y$.  Note that the set $CN_Y^X$ is an isotropic and even Lagrangian analytic submanifold of (the symplectic manifold) $T^*X$.

The following notions of WF-holonomicity come from \cite[Section 3.2]{AizDr}, with a slight generalization of \cite[Section 3.2]{AizDr} and \cite[Definition 4.1.1]{CHLR} (by relaxing the smoothness conditions in the algebraic case).

\begin{defn}[$\WF$-holonomicity]\label{def:WF-hol}
Let $X$ be an analytic submanifold of $F^n$ and let $\xi$ be a distribution on $X$.
Say that  $\xi$ is analytically
WF-holonomic if the wave front set of $\xi$ is contained in
a finite union of co-normal bundles $CN_{Y_i}^X$ where each $Y_i\subset X$
is an analytic submanifold of $X$.
Call $\xi$ algebraically WF-holonomic if moreover one has $\dim X=\dim \overline{X}^{\rm Zar}$ and  $\dim Y_i=\dim  \overline{Y_i}^{\rm Zar}$, with $\overline{X}^{\rm Zar}$ and the $\overline{Y_i}^{\rm Zar}$ the Zariski closure of $X$ and the $Y_i$.
\\

If $X$ and the $Y_i$ are mereley strict $C^1$ submanifolds (instead of analytic), then we say strict $C^1$ WF-holonomic for the corresponding notion.
\end{defn}

\begin{remark}\label{rem:var:hol}
Note that we don't assume smoothness of $\overline{X}^{\rm Zar}$ (neither of the $\overline{Y_i}^{\rm Zar}$), and that $F$ is allowed to have positive characteristic. In these ways, the above definition of algebraic WF-holonomicity generalizes the notion of algebraic WF-holonomicity of \cite[Definition 3.2.1]{AizDr} and \cite[Definition 4.1.1]{CHLR}. In the case that $\overline{X}^{\rm Zar}$ is smooth and that $F$ has zero characteristic, all these definitions of algebraic WF-holonomicity coincide (note that the smoothness condition is forgotten in the final part of Proposition 4.3.1 of \cite{CHLR}, see Remark \ref{rem:amend:4.3.1} below).
 \end{remark}

%Of course, the notion of $\cL^F$-definable is only well behaved in the case that $F$ has characteristic zero, via Macintyre's quantifier elimination result \cite{Mac}, \cite{Prestel}, \cite{Denef2}.
%Note that a distribution $\xi$ on an analytic manifold $X$ of dimension $m$ is Nash-WF-holonomic if and only if for all semi-algebraic analytic isomorphism $\iota: X\to X'$ of analytic varieties such that $X'$ is contained in the set of $F$-rational points on a locally closed, smooth, algebraic submanifold of $\AA_F^n$ of dimension $m$, the push-forward of $\xi$ along $\iota$ is algebraically WF-holonomic.

%\RamiQ{The first 2 subsections of the next section are also about terminology. may be to include them here (or alternativly to unify sections 2 and 3)}

\subsection{Definitions and results for a fixed local field $F$}\label{sec:fixed:F}
\label{subsec:F-X}
\label{subs:F}

From  now on, and until the end of Section \ref{sec:loci}, we fix a local field $F$ of characteristic zero and an additive chacacter $\psi:F\to\CC^\times$ which is trivial on $\cM_F$ and nontrivial on $\cO_F$. (Thus, $F$ is a finite field extension of $\QQ_p$ for some prime $p$.)  (An additive character is a continuous group homomorphism from the additive group on $F$ to $\CC^\times$.)

The advantage of working with fixed $F$ is the ease of presentation both for definable sets and for the rings of complex valued functions that we integrate (called functions of $\cCexp$-class). Uniformity will come at the end of the paper, in Section \ref{sec:unif}.

\subsection{Languages on $F$: subanalytic and semi-algebraic}

For each integer $n\geq 0$, let $\cO_F\langle x_1,\ldots,x_n\rangle$ be the $p$-adic completion of $\cO_F[x_1,\ldots,x_n]$ inside $\cO_F[[x_1,\ldots,x_n]]$, for the Gauss-norm.
%(namely the supremum of the norms of the coefficients of the power series).
Note that $\cO_F\langle x_1,\ldots,x_n\rangle$ consists of power series $\sum_{i\in\NN^n} a_i x^i$ in multi-index notation and with $a_i\in \cO_F$ such that  $|a_i|$ goes to zero when $|i|:=i_1+\ldots + i_n$ goes to infinity.  For $f$ in $\cO_F\langle x_1,\ldots,x_n\rangle$, write $\bar f$  for the restricted analytic function associated to $f$, namely, the function $\bar f:F^n\to F$ sending $z\in \cO_F^n$ to the evaluation $f(z)$ of $f$ at $z$ (i.e. the $p$-adic limit of the partial sums), and sending the remaining $z$ to $0$.

In this section \ref{sec:fixed:F} we use the following two languages (in the first order sense of model theory). Let $\cL^F$ be the ring language (namely having symbols $+,-,\cdot,0,1$), together with constant symbols from $\cO_F$. Let $\cLan$ be $\cL^F$ together with for each $f \in \cO_F\langle x_1,\ldots, x_n\rangle$ a function symbol for the restricted analytic function $\bar f$ associated to $f$. A set $X\subset F^n$ is called $\cL^F$-definable, resp.~$\cLan$-definable, if there is an $\cL^F$-formula, resp.~an $\cLan$-formula $\varphi(x)$ with free variables $x_1,\dots,x_n$, such that $X$ consists of the values for $x=(x_1,\dots,x_n)$ that make $\varphi$ valid in $F$. A function between definable sets is called definable if its graph is a definable set. (All this is standard in model theory and first order logic.) These definable sets and functions are called semi-algebraic, resp.~subanalytic for $\cL^F$, resp.~$\cLan$, and have many geometric properties, enabled in the first place by quantifier elimination results in closely related (slightly bigger) languages, see e.g.~\cite{Mac}, \cite[Theorem 5.6]{Prestel}, \cite{Denef2}, \cite{DvdD}, \cite{vdDHM}.

The dimension of a nonempty $\cLan$-definable set $C\subset F^n$ is defined as the maximum of the dimensions of analytic submanifolds of $F^n$ contained in $C$ (see Section 3.15 of \cite{DvdD} and Lemma \ref{lem:DDset} for the dimension theory of $\cL$-definable sets). If moreover $C$ is  $\cL^F$-definable, then it is equal to the dimension of the Zariski closure of $C$, see \cite{Dries-dim} or \cite{SvdD}.

\subsection{$\cCexpL$-class and $\cL$-WF-holonomicity}

From now (and until the end of Section \ref{sec:loci}) we fix $\cL$ to be either $\cL^F$ or $\cLan$. By an $\cL$-manifold we mean an $\cL$-definable set $X\subset F^n$ for some $n\geq 0$ such that $X$ is moreover an analytic submanifold of $F^n$. By an $\cL$-analytic map we mean an $\cL$-definable, analytic map between $\cL$-manifolds. By an  analytic isomorphism we mean an analytic bijection between analytic manifolds whose inverse is also analytic.
We now come to our key definitions.

\begin{defn}[Functions of $\cCexpL$-class]\label{defn:cexp}
Let $X\subset F^n$ be an $\cL$-definable set. The $\CC$-algebra $\cCexpL(X)$ is defined as the sub-$\CC$-algebra of all complex valued functions on $X$ generated by functions $X\to \CC$ of the following forms:
\begin{enumerate}
\item\label{gen:1} $x\mapsto |f(x)|$,  %where $f:X\to F$ is $\cL^F$-definable, resp.~$\cLan$-definable.

\item\label{gen:2} $x\mapsto \ord g(x)$, %where $g:X\to F^\times$ is $\cL^F$-definable, resp.~$\cLan$-definable.

\item\label{gen:3} $x\mapsto \psi(h(x))$, %where $h:X\to F^\times$ is $\cL^F$-definable, resp.~$\cLan$-definable.
\end{enumerate}
where $f:X\to F$, $h:X\to F$, and $g:X\to F^\times$ are $\cL$-definable functions and where $\psi$ is the additive character fixed above.
A function in $\cCexpL(X)$ %for some definable set $X$
is called a function of $\cCexpL$-class.
\end{defn}

These algebras of Definition \ref{defn:cexp} are versatile because of their stability under integration (and thus under Fourier transforms), see Section 8.6 of \cite{CLexp} and Theorem 3.2.1 of \cite{CGH}, and, they inherit geometrical properties from their definable building blocks, see e.g.~\cite{CGH5}.

For $x\in F^n$ and $r\in\ZZ$, write $B_r(x)$ for the ball $\{y\in F^n\mid \ord (y-x)\geq r\}$, where the order of a tuple is the minimum of the orders of the entries. Write $\11_A$ for the characteristic function of a subset $A\subset S$ (where the superset $S$ is usually implicitly clear).

\begin{defn}[Distributions of $\cCexpL$-class]\label{defn:cexp-d}
We say that a distribution $\xi$ on an $\cL$-submanifold $X\subset F^n$ is of class $\cCexpL$ if the following condition on the continuous wavelet transform of $\xi$ is satisfied: The function
$$
D_\xi:X\times F^\times \to\CC
$$
is a $\cCexp$-class function, where
$$
D_\xi(x,r) =  \begin{cases}
   \xi(\11_{B_{|r|}(x)\cap X}) \mbox{  if $B_{|r|}(x)\cap X$ is compact},\\
0 \mbox{ otherwise.}
\end{cases}
$$
%$$
%D_\xi(x,r) =  \begin{cases}
  % \xi(\11_{B_r(x)\cap X}) \mbox{  if $B_r(x)\cap X$ is compact},\\
%0 \mbox{ otherwise.}
%\end{cases}
%$$
We call $D_\xi$ the $B$-function of $\xi$ (where the letter $B$ comes from ball).
\end{defn}

Proposition \ref{prop:Julia} below implies that the condition for a distribution $\xi$ on $X$ to be of $\cCexpL$-class is independent of the embedding of $X$ into $F^n$.  Note that $D_\xi$ is a continuous wavelet transform of $\xi$.

\begin{defn}[$\cL$-WF-holonomicity]\label{defn:L-WF}
Let $\xi$ be a distribution of class $\cCexpL$ on an $\cL$-manifold $X$. Then $\xi$ is called $\cL$-WF-holonomic if
if the wave front set of $\xi$ is contained in
a finite union of co-normal bundles $CN_{Y_i}^X$ where each $Y_i\subset X$
is an $\cL$-submanifold of $X$.
\end{defn}

\subsection{Main results for fixed $F$}

Now we can formulate our main results. The holonomicity result is in fact the key goal of this paper.
 (Recall that $F$ has characteristic zero from Section \ref{sec:fixed:F}  until the end of Section \ref{sec:loci}.)

\begin{thm}[Holonomicity]\label{thm:main}
Let $X$ be an $\cL$-manifold and let $\xi$ be a distribution on $X$ of class $\cCexpL$.
Then $\xi$ is $\cL$-WF-holonomic.

In particular, $\xi$ is analytically WF-holonomic, and if $\cL$ is $\cL^F$, then $\xi$ is algebraically WF-holonomic. % (see Definition \ref{def:WF-hol}).
\end{thm}

The following regularization result, allowing to extend distributions to larger domains without leaving the $\cCexp_\cL$-class, will be useful to show Theorem \ref{thm:main}.

\begin{theorem}[Regularisation]\label{thm:reg:F:one}
Consider an $\cL$-manifold $X$.
Let $U$ be a nonempty $\cL$-definable open subset of $X$.
Then the restriction map
$$
\cS^* (X) \to \cS^*(U)
$$
admits a linear section $\cS^* (U) \to \cS^*(X)$
 that maps  distributions of $\cCexp_\cL$-class  to distributions of $\cCexp_\cL$-class.
\end{theorem}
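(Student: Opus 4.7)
The plan is to define the section $s \colon \cS^*(U) \to \cS^*(X)$ by an explicit formula for $s(\xi)(\varphi)$ in terms of the $B$-function $D_\xi$ together with an $\cL$-definable boundary correction, and then to verify linearity in $\varphi$, well-definedness, and $\cCexp_\cL$-stability. The main geometric difficulty is that for $\varphi \in \cS(X)$ the naive restriction $\varphi|_U$ is typically not compactly supported in $U$, so one cannot merely pull back along the inclusion $\cS(U) \hookrightarrow \cS(X)$; one must add back carefully chosen data supported near $Z := X \setminus U$.

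First I would introduce, for each $n \in \ZZ$, the $\cL$-definable subset
$$
U_n := \{x \in X : B_n(x) \cap X \subset U\}.
$$
Since the condition $B_n(x)\cap X \subset U$ depends only on the ball $B_n(x)$, the set $U_n$ is a disjoint union of the clopen-in-$X$ sets $B_n(y) \cap X$, hence is itself clopen in $X$, with $U_n \subset U_{n+1}$ and $\bigcup_n U_n = U$. For $\varphi \in \cS(X)$ with minimal level of local constancy $n(\varphi)$ the function $\varphi \cdot \11_{U_n}$ belongs to $\cS(U)$ for every $n \geq n(\varphi)$, so $\xi(\varphi \cdot \11_{U_n})$ makes sense but varies with $n$. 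I then set
$$
s(\xi)(\varphi) := \xi(\varphi \cdot \11_{U_{n(\varphi)}}) + C_{n(\varphi)}(\varphi),
$$
where $C_n(\varphi)$ is an $\cL$-definable correction built as a finite $\cCexp_\cL$-sum of values of $D_\xi$ on balls at scale $n$ which straddle $Z$. The shape of $C_n$ is dictated by the cocycle requirement $C_{n+1}(\varphi) - C_n(\varphi) = - \xi(\varphi \cdot \11_{U_{n+1} \setminus U_n})$, so that the value of $s(\xi)(\varphi)$ becomes independent of the chosen scale. The model case $X = F$, $U = F^\times$, analyzed by writing $\varphi = \varphi(0) \11_{B_N(0)} + (\varphi - \varphi(0) \11_{B_N(0)})$, shows $C_n$ to be a telescoping sum of $D_\xi$-values on annuli $B_k(0) \setminus B_{k+1}(0)$ multiplied by $\varphi(0)$.

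To confirm $\cCexp_\cL$-preservation I would apply Theorem \ref{thm:res:def} to reduce, locally on $X$, to the case where $Z$ is a monomial divisor in suitable $\cL$-analytic coordinates. In this reduced situation the sets $U_n$, $U_{n+1} \setminus U_n$, and the straddling balls are described by simple $\cL$-definable conditions, and $C_n(\varphi)$ is a finite $\cL$-definable sum of $\cCexp_\cL$-class terms. The $B$-function $D_{s(\xi)}(x, r)$ then expresses as $\xi$ applied to an indicator of a clopen ball contained in $U$ plus an $\cL$-definable sum in $D_\xi$, and therefore lies in $\cCexp_\cL(X \times F^\times)$ by the integration- and summation-stability of $\cCexp_\cL$-class functions (Section 8.6 of \cite{CLexp}, Theorem 3.2.1 of \cite{CGH}); pushing back through the finite-fibered resolution is allowed by the same stability together with the geometric control provided by Theorem \ref{thm:res:def}.

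The main obstacle will be the simultaneous verification that $s(\xi)$ is linear in $\varphi$ and that the formula is independent of the auxiliary scale $n \geq n(\varphi)$; both reduce to the cocycle property of the family $\{C_n\}_n$ combined with the consistency $D_\xi(y, r) = \sum_i D_\xi(y_i, r')$ of the $B$-function on sub-balls contained in $U$, whenever $B_r(y) = \bigsqcup_i B_{r'}(y_i)$. Once these are settled, the section property $s(\xi)|_U = \xi$ follows at once: for $\varphi \in \cS(U) \subset \cS(X)$ no ball at level $n(\varphi)$ can meet $Z$, so $C_{n(\varphi)}(\varphi) = 0$ and $\varphi \cdot \11_{U_{n(\varphi)}} = \varphi$.
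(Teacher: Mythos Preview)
Your plan has a genuine gap at the construction of the correction term $C_n(\varphi)$. You require the cocycle identity $C_{n+1}(\varphi) - C_n(\varphi) = -\xi(\varphi \cdot \11_{U_{n+1}\setminus U_n})$, and you want $C_n$ to be a \emph{finite} $\cCexpL$-combination of values $D_\xi$ on balls at scale $n$ that straddle $Z$. But for a ball $B$ meeting $Z$ the set $B\cap U$ is not compact (its closure in $X$ contains points of $Z$), so by the very definition of the $B$-function one has $D_\xi=0$ there; these values carry no information about $\xi$. Already in your model case $X=F$, $U=F^\times$, take $\xi$ to be the Haar measure $|x|^{-1}\,|dx|$ on $F^\times$: then $\xi(\11_{U_{n+1}\setminus U_n})=1-q_F^{-1}$ is independent of $n$, so the cocycle forces $C_n(\11_{\cO_F})=\sum_{k\ge n}(1-q_F^{-1})$, which diverges. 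No finite $\cCexpL$-expression in $D_\xi$ can realise this. In short, the scheme ``exhaust $U$ by clopens and add a boundary correction in $D_\xi$'' cannot work, because the missing mass near $Z$ is in general an infinite, non-summable series depending on $\xi$.

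There is also a structural mismatch: you invoke Theorem~\ref{thm:res:def} to monomialise $Z$, but that resolution result is only available for $\cL=\cLan$, whereas Theorem~\ref{thm:reg:F:one} is asserted for both $\cL=\cL^F$ and $\cL=\cLan$.

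The paper avoids the convergence issue entirely by working on the test-function side rather than the distribution side. One first builds, when $Z$ is a closed $\cL$-submanifold of $X$, a linear section $\nu:\cS(Z)\to\cS(X)$ of restriction which sends $\cCexpL$-families to $\cCexpL$-families; this is a definable tubular-neighborhood construction (after reducing $Z$ to a graph via Lemma~\ref{lem:imp} and partition of unity, Proposition~\ref{prop:part}). Then for $\varphi\in\cS(X)$ one sets $\widetilde\varphi:=(\varphi-\nu(\varphi|_Z))|_U$, which lies in $\cS(U)$ because $\varphi-\nu(\varphi|_Z)$ is locally constant and vanishes on $Z$, hence vanishes on a neighborhood of $Z$. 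The section is $\kappa(\xi)(\varphi):=\xi(\widetilde\varphi)$; no infinite sums appear, and $\cCexpL$-preservation follows from Proposition~\ref{prop:Julia}. The general $Z$ is handled by stratifying it into closed $\cL$-submanifolds and iterating. This argument uses only Urysohn, partition of unity, and the graph-cover lemma, all of which are available in both languages, and does not touch resolution of singularities.
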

With $U$ and $X$ as in the theorem, for any $\xi$ in $\cS^* (U)$ and any linear section $\kappa:\cS^* (U) \to \cS^*(X)$, $\kappa(\xi)$ is usually called a regularization of $\xi$.
%raf: now better?
%\RamiQ{Will we use the notation $\kappa$? It seems a bit misliding since it is not uniqly defined. If we do not need it I prefere to omit the last line.}

In \cite{CHLR}, Theorem 3.4.1 (resp.~Remark 4.3.3), it is shown that the wave front set of a $\cCexpL$-class distribution equals the complement of the zero locus of a function of the same class. It would be interesting to find a precise criterion for such zero loci so that they are the complement of a wave front of some $\cCexpL$-class distribution; we give a partial answer to this question in Theorem \ref{prop:conv}.
The smooth locus of a distribution $\xi$ on a analytic submanifold $X\subset F^n$ is defined as the set of those $x \in X$ which allow an open neighborhood $U$ such that the restriction of $\xi$ to $U$ is a smooth measure.
We know that the smooth locus of a $\cCexpL$-class distribution $\xi$ on $X\subset F^n$ is dense open in $X$ by Theorem 4.1.2 %and Remark 4.3.3
of \cite{CHLR}, see Theorem \ref{thm:gen.sm} below.
The following result gives the converse to these properties.

\begin{theorem}[Correspondence of loci]\label{prop:conv}
Let $X$ be an $\cL$-manifold and let $g$ be in $\cCexpL(X)$.
If the zero locus $Z(g)$ of $g$ is dense open in $X$, then there exists a distribution $\xi$ on $X$ which is of $\cCexpL$-class and such that the smooth locus of $\xi$ equals $Z(g)$.
\end{theorem}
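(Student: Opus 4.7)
The plan is to construct a positive, $\cCexpL$-class Radon measure $\xi$ on $X$ whose support equals $S := X\setminus Z(g)$. Since $Z(g)$ is open and dense in $X$, the set $S$ is closed and has empty interior in $X$. Once such $\xi$ is in hand, its smooth locus coincides with $Z(g)$: on $Z(g)$ the distribution $\xi$ vanishes, hence is locally a smooth (zero) measure; conversely, if $\xi$ were smooth on some open $U\subseteq X$ meeting $S$, then $\xi|_U$ would be a smooth measure supported in the interior-free set $U\cap S$, forcing $\xi|_U = 0$ and contradicting $U\cap \operatorname{supp}\xi\neq\emptyset$.

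For the construction, I first invoke that the zero locus of a $\cCexpL$-class function is $\cL$-definable (a fact inherent to the theory of $\cCexpL$-class, cf.\ \cite{CHLR}), so $S$ is an $\cL$-definable closed subset of $X$ of dimension strictly less than $m := \dim X$. Applying $\cL$-definable cell decomposition, I write $S = \bigsqcup_{i=1}^N C_i$ where each $C_i$ is an analytic $\cL$-submanifold of $X$ of dimension $d_i<m$ admitting an $\cL$-definable analytic isomorphism $\varphi_i : U_i \to C_i$ from an open $U_i\subseteq F^{d_i}$. I then set
\[
\xi_i(\phi) \;:=\; \int_{U_i} \phi(\varphi_i(u))\,du, \qquad \phi\in\cS(X),
\]
a well-defined positive measure on $X$ whose support (in the topology of $X$) is the closure of $C_i$, and I put $\xi := \sum_{i=1}^N \xi_i$. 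Positivity of the $\xi_i$ prevents any cancellation, hence
\[
\operatorname{supp}\xi \;=\; \bigcup_{i=1}^N \overline{C_i} \;=\; \overline{S} \;=\; S,
\]
where closures are taken in $X$ and the last equality uses that $S$ is already closed in $X$.

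It remains to verify that $\xi$ is of $\cCexpL$-class, equivalently that each $D_{\xi_i}$ is $\cCexpL$-class on $X\times F^\times$. A direct computation yields
\[
D_{\xi_i}(x,r) \;=\; \int_{U_i}\11_{B_{|r|}(x)}(\varphi_i(u))\,du,
\]
the Haar measure of the $\cL$-definable family of sets $\{u\in U_i: \varphi_i(u)\in B_{|r|}(x)\}$ parameterized by $(x,r)\in X\times F^\times$. Since the characteristic function of any $\cL$-definable set is $\cCexpL$-class, and since the $\cCexpL$-class is stable under parametrized Haar integration (the integration theorem of \cite{CLexp}, see also \cite{CGH}), $D_{\xi_i}$ is of $\cCexpL$-class, as required. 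The main technical point to execute with care is producing the uniformly $\cL$-definable analytic parameterizations $\varphi_i$ of all cells; in the semi-algebraic case $\cL=\cL^F$ this follows from $p$-adic cell decomposition, and in the subanalytic case $\cL=\cLan$ from the corresponding analytic cell decomposition, after which no further obstacle remains.
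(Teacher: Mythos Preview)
Your argument breaks at the very first step: the assertion that $S = X \smallsetminus Z(g)$ is $\cL$-definable is false, and this is not what \cite{CHLR} says. Zero loci of $\cCexpL$-functions form a strictly richer class than $\cL$-definable sets --- indeed this is precisely why the ``loci'' formalism of \cite{CGH}, \cite{CGH5}, \cite{CHLR} was developed. Here is a counterexample satisfying all the hypotheses of the theorem. Take $X = \cO_F \times (\cO_F\smallsetminus\{0\})^2$ and
\[
g(x,y,z) \;=\; \11_{\{x=0\}}\cdot\bigl(|y|^{-1} - \ord z\bigr),
\]
which visibly lies in $\cCexpL(X)$. Then $Z(g)$ is open and dense: it contains the dense open $\{x\neq 0\}$, and near any point $(0,y_0,z_0)$ with $\ord z_0 = q_F^{\,\ord y_0}$ the valuations of $y$ and $z$ are locally constant, so $g$ vanishes on a full neighborhood. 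However
\[
S \;=\; \bigl\{(0,y,z) : \ord z \neq q_F^{\,\ord y}\bigr\},
\]
and if this were $\cL$-definable then its trace on the value group would be a Presburger subset of $\ZZ^2$, forcing the graph of $m\mapsto q_F^{\,m}$ to be Presburger-definable --- which it is not. Consequently $S$ admits no stratification into $\cL$-submanifolds, and your construction of the $C_i$ cannot even begin.

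The paper's proof avoids this obstacle by never isolating $S$ as a set. It stratifies the ambient $X$ (which \emph{is} definable) into $\cL$-manifolds $D_i$ on which $g$ becomes locally constant (Proposition \ref{prop:loc:const}), replaces $g$ by a bounded representative with the same zero locus (Proposition \ref{prop:int-loc}), and sets $\xi = \sum_i g\,\mu_{D_i}$ with $\mu_{D_i}$ the canonical submanifold measure on $D_i$. The non-definable information about $S$ is carried by the factor $g$ itself, not by the geometry of the strata.
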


The result shows that zero loci of $\cCexpL$-class functions are the right objects to describe smooth loci of $\cCexpL$-class. (For other objects described precisely by loci of $\cCexpL$-class functions, see e.g.~\cite{CGH}, \cite{CGH5}.)

The following is our resolution result for $\cLan$-definable functions on $\cLan$-definable sets, refining Theorems 2.2 and 2.4 of \cite{DvdD}. Note that the resolving maps $\varphi_i$ have restrictions to $U = (\cO_F\smallsetminus\{0\})^m $ which are only locally isomorphisms.

\begin{defn}\label{def:unit:mon}
An analytic function $u:\cO_F^m\to \cO_F$ is called an analytic unit on $\cO_F^m$ if it is given by a power series which is an invertible element in the ring $\cO_F\langle x_1,\ldots,x_m \rangle$. By a monomial on $\cO_F^m$ we mean a function $M: \cO_F^m \to \cO_F$ sending $x$ to $d\cdot\prod_{i=1}^m x_i^{\mu_i}$ with exponents $\mu_i$ which are natural numbers and with $d\in \cO_F$.
%We say that the monomial $M$ has natural exponents if the $\mu_i$ are nonnegative, and then $M$ extends to a function $\cO_F^m\to F$.
\end{defn}
\begin{theorem}[Resolution result for  $\cLan$-definable sets and functions]\label{thm:res:def}
Let $X\subset \cO_F^n$ be a closed $\cLan$-definable set of pure dimension $m$ and let $f:X\to \cO_F^k$ be an $\cLan$-definable function for some $k$ and $n\geq m\geq 0$. Write $U$ for $(\cO_F\smallsetminus\{0\})^m$.
Then there exist finitely many
$\cLan$-definable functions
$$
\phi_i: \cO_F^m\to X
$$
such that each $\phi_i$ is analytic, proper, and the following hold for some positive integers $d_i$.
\begin{enumerate}
\item\label{thm:res:def:1}
The set $U_i:=\phi_i(U)$ is open in $X$.
\item\label{thm:res:def:2} The restriction $\phi_i|_{U}$ is a local isomorphism to $U_i$ with finite fibers of fixed size $d_i$.
\item\label{thm:res:def:3} There are analytic units $u_{ij}$ on $\cO_F^m$ and monomials $M_{ij}$ on $\cO_F^m$
such that for each component $f_j$ of $f$ one has
\begin{equation}\label{fjvarphii}
f_j (\phi_i (x)) = u_{ij}(x) M_{ij}(x) \mbox{ for each $x$ in $U$ and each $i,j$.}
\end{equation}
\item\label{thm:res:def:4} The $U_i$ are pairwise disjoint, and, the union of the $U_i$ is dense open in $X$.
\item\label{thm:res:def:5}  For each $i$ one has either $u_{i1}=1$, or, $M_{i1}=1$.
\end{enumerate}
%Moreover, we can assume for each $i$ that either $u_{i1}=1$, or, $M_{i1}=1$.
%Moreover, we can furthermore require for each $j$ and $k$ that all exponents of
%$$
%\frac{M_{ij}}{M_{ik}}
%$$
%are either non-negative, or, non-positive.
\end{theorem}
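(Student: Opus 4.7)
The plan is to derive Theorem \ref{thm:res:def} from the subanalytic cell decomposition and resolution (of singularities) results of Denef--van den Dries (Theorems 2.2 and 2.4 of \cite{DvdD}), refined by an explicit $\cO_F^m$-parametrization of each cell through branched covers of the form $z \mapsto \lambda z^n$. First, I would apply van den Dries-style cell decomposition for $\cLan$-definable functions to the pair $(X, f)$, obtaining a finite partition of $X$ into $\cLan$-analytic cells $C_\alpha$ of pure dimension $m$ (together with a negligible set of smaller dimension) such that each $C_\alpha$ is $\cLan$-analytically isomorphic to a standard ``open'' cell $D_\alpha \subset F^m$ whose coordinates range over sets of the form $\lambda_{\alpha,i} \cdot P_{n_{\alpha,i}} \cap \cO_F$ (with $P_n$ the set of nonzero $n$-th powers), and such that each component $f_j$ pulled back to $D_\alpha$ takes the form of an analytic unit times a monomial in the cell coordinates. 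Shrinking further if needed, I would ensure the $C_\alpha$ are pairwise disjoint with dense open union in $X$.

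Second, I would parametrize each $D_\alpha$ by $\cO_F^m$ via the proper $\cLan$-analytic map $\pi_\alpha: \cO_F^m \to \overline{D_\alpha}$ defined by $(z_1,\ldots,z_m) \mapsto (\lambda_{\alpha,1} z_1^{n_{\alpha,1}}, \ldots, \lambda_{\alpha,m} z_m^{n_{\alpha,m}})$; its restriction to $U = (\cO_F\smallsetminus\{0\})^m$ is a local analytic isomorphism onto an open subset of $X$ with constant fiber size $d_\alpha = \prod_i n_{\alpha,i}$. Composing $\pi_\alpha$ with the analytic isomorphism $D_\alpha \iso C_\alpha \hookrightarrow X$ gives the desired maps $\phi_i$; properness is automatic since $X \subset \cO_F^n$ is compact. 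The monomial-times-unit form is preserved under the substitution $x_i = \lambda_{\alpha,i} z_i^{n_{\alpha,i}}$: indeed, if $f_j|_{D_\alpha}(x) = u(x) M(x)$, then $u(\lambda z^n)$ remains in $\cO_F\langle z\rangle^{\times}$ (the Gauss-norm completion is closed under such substitutions, and invertibility is preserved because the constant term is preserved), and $M(\lambda z^n)$ is again a monomial. This yields items \eqref{thm:res:def:1}--\eqref{thm:res:def:4}.

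The one nontrivial remaining point is the normal-form condition \eqref{thm:res:def:5}: on each piece we currently have $f_1 \circ \phi_i = u_{i1} M_{i1}$ with no relation between the two factors. To handle this, I would split into two cases. If the monomial $M_{i1}(x) = d \prod_j x_j^{\mu_j}$ has some exponent $\mu_{j_0} > 0$, then on a branched cover of $\cO_F^m$ along the $j_0$-th coordinate (i.e.~precomposing with the map $w \mapsto w^{\mu_{j_0}}$ in the $j_0$-slot, which is again of the form $\phi_i$ composed with a degree $\mu_{j_0}$ cover and thus preserves all structural properties while multiplying $d_i$ by $\mu_{j_0}$), one extracts a $\mu_{j_0}$-th root of the unit $u_{i1}$ via Hensel's lemma and absorbs it into the $j_0$-coordinate, producing $u_{i1} = 1$. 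If instead $M_{i1}$ is a constant (all $\mu_j = 0$), then $u_{i1} M_{i1} = d \cdot u_{i1}$ is itself an analytic unit (after absorbing the invertible constant $d$ into $u_{i1}$), so we are in the case $M_{i1} = 1$; the remaining possibility $d=0$ means $f_1 \circ \phi_i \equiv 0$, which is $M_{i1} = 0$ and still falls under the case $u_{i1} = 1$ with $M_{i1}$ declared to be the zero monomial.

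The main obstacle I expect is the bookkeeping for condition \eqref{thm:res:def:5} together with the constraint that fibers have a single constant size $d_i$ per piece: the branched cover used to extract roots of $u_{i1}$ must be compatible with the structure of $M_{ij}$ for $j \geq 2$, and one needs to subdivide further along values of $\ord(d)$ and along residues of $u_{i1}(0)$ modulo $\mu_{j_0}$-th powers to ensure that Hensel's lemma applies uniformly. This is reminiscent of the standard extraction of roots in the Denef--van den Dries cell decomposition, and it is precisely at this step that allowing the $\phi_i$ to have finite fibers (as in an alteration) rather than being isomorphisms becomes essential.
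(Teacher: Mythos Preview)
Your overall architecture---reduce to Denef--van den Dries resolution plus parametrize residual $P_n$-conditions by power maps $z\mapsto\lambda z^n$---is exactly the paper's strategy, and your handling of properness and of the preservation of the unit-times-monomial form under such substitutions is fine.

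There is, however, a genuine gap in your first step. You invoke a ``van den Dries-style cell decomposition for $\cLan$-definable functions'' that directly delivers $f_j$ in the form $u\cdot M$ on each cell. No such result is available in that form. Theorems 2.2 and 2.4 of \cite{DvdD} resolve \emph{$\cL_{\rm an,D}^F$-terms}, not arbitrary $\cLan$-definable functions; the latter are strictly more general because definable functions can encode root extractions (e.g.\ the function ``the $n$-th root of $x$ on $1+\cM_F$''). The paper closes this gap via Proposition~\ref{prop:thm5.5} (from \cite{vdDHM}): every $\cLan$-definable function is piecewise a finite sum of definable functions whose $M$-th powers are $\cL_{\rm an,qe}^F$-terms. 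One then (i) resolves those terms to monomial form via \cite{DvdD}, and (ii) precomposes with an additional layer of $M$-th power maps so that the $M$-th roots become honest terms. This is precisely the content of Case~3 in the paper's proof, and it is the step your sketch is missing. Without it, your claim that $f_j|_{D_\alpha}$ is already $u\cdot M$ in cell coordinates is unjustified.

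A secondary remark on item~\eqref{thm:res:def:5}: your plan to pass to a further branched cover and extract a $\mu_{j_0}$-th root of $u_{i1}$ via Hensel works, but the paper instead uses a local analytic \emph{isomorphism} $x_1\mapsto x_1 v(x)$ with $v^{k_1}=u$ (Lemma~\ref{lem:swal.un}, following Igusa's special restricted power series). This avoids enlarging the fiber degree $d_i$ at that stage; the localization into small balls needed for Hensel to apply is handled by a finite open cover rather than by a covering map. Both approaches are valid, but the paper's keeps the alteration degree coming only from the power maps already introduced in Cases~2 and~3.
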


The resolution theorem can of course also be applied to other situations, e.g. to $F$-valued functions $f_j:X\to F$ instead of $\cO_F$-valued, by working on pieces where $|f_j|\leq 1$, resp.~where $|f_j|> 1$ and replacing $f_j$ by $1/f_j$ on the latter.

%\RamiQ{ at this poit the theorem is formulatedn for functions with values in $\O_F$. in the key lemma it is used for functions with values in $F$. We can ether add acorrolary or chinge the formulation an adapt the prove. I think I support the second option}
%
%\RafQ{Just apply the thm to $f$ and to $1/f$ on pieces. ok?  }
%

We will give uniform versions of Theorems \ref{thm:main}, \ref{thm:reg:F:one} and \ref{prop:conv} below in Section \ref{sec:unif}.

\section{Skech of the proofs}\label{s:skech}

We start with proving regularization (Theorem \ref{thm:reg:F:one}).  By partition of unity, the question is local, so it is enough to extend a $\cCexpL$-class distribution for an open definable subset $U$ to a larger open definable subset $X$ in $F^n$. We can stratify the complement $Z:=X\smallsetminus U$ to definable manifolds. Proceeding by induction we can assume that $Z$ is smooth. Again using locality of the question and a suitable version of the implicit function theorem we can assume that  $Z$ is a graph. In this case, we can extend our distribution using a definable tubular neighborhood of $Z$.

The proof of holonomicity (Theorem \ref{thm:main}) is based on a theorem from \cite{CHLR} stating that any $\cCexpL$-class distribution is smooth on an open dense subset. We use a partition of unity and regularisation to reduce to proving WF-holonomicity of distributions $\xi$ on $X=\cO_F^n$. The next ingredient in the proof is a key lemma (Lemma \ref{lem:key}) stating that any smooth $\cCexpL$ function $f$ on an open dense definable set $U \subset X$ can be extended to a $\cCexpL$-class WF-holonomic distribution on $X$ (maybe after restricting it first to a smaller open dense subset). The Key lemma and the result from  \cite{CHLR} allow us to replace $\xi$ with a distribution $\xi'$ whose support is of smaller dimension.  We resolve (using Theorem \ref{thm:res:def}) the support of $\xi'$. We then use regularisation in order to construct a distribution on the resolution which coincides with $\xi'$ on an open dense set. Using the push forward of that distribution, and the induction assumption, we can replace $\xi'$ with another distribution whose support has even smaller dimension. We continue by induction until we kill $\xi$ completely.

The proof of the Key lemma is based on resolution of singularities for definable functions (Theorem \ref{thm:res:def}). Using it we can reduce to the case that $U=(\cO_F\smallsetminus \{0\})^n$ and $f$ has an explicit form containing (quotients of) monomials, units, the absolute value, the valuation, and the additive character $\psi$. In this case we explicitly construct a $\cCexpL$-class distribution on $X$ that extends $f$ and prove that it is WF-holonomic.

\begin{remark}
$ $
\begin{itemize}
\item Both the holonomicity theorem and the key lemma are proven by induction. However, the Key lemma is less suitable for induction since it only claims \textbf{existence} of an extension with certain properties. Therefore when we prove the Key lemma it is more convenient to use the holonomicity theorem for smaller dimension rather than the Key Lemma itself. Because of that, we prove both results together by induction.
\item  The proof of the regularization result works uniformly for the algebraic language and in the analytic one. However, this is not the case for the Holonomicity theorem. The proof of the Holonomicity theorem relies on resolution of singularities for definable functions, which does not (yet) work so well in the algebraic language. Though resolution of definable sets in the algebraic language is not a problem (modulo Hironaka's theorem), resolution of definable functions seems to be hard to deduce from the existing literature. Therefore we chose to prove the holonomicity for the analytic language first, and then deduce it for the algebraic language based on \cite{CHLR}.
\item The resolution result is, in fact, an alteration rather than a modification since it is only a \textbf{local} isomorphism on an open dense set. An actual modification cannot resolve a root function to a monomial function.
However, it is not a problem, since our use of resolution is for pushing forward distributions, so we just have to divide by the size of the fiber (which we assured to be constant) to get the desired result.
\item Although the resolution result is crucial for the Key-lemma it does not resolve it compliantly, and some additional analysis is required. The reason is that even after the resolution the explicit form of the function $f$ is not a product of functions that each depend on only one coordinate. This is because composition of an additive character with a quotient of monomials is not such function.
\end{itemize}
\end{remark}

The proof of the resolution of singularities of definable sets and functions (Theorem \ref{thm:res:def}) is based on Hironaka's theorem and results from \cite{vdDHM} on decompositions of definable functions to simpler functions called terms (in the sense of model theory), of a suitable language. We first use Hironaka's theorem to resolve terms on $\cO_F^n$, similar as in \cite{DvdD}. We then use this in order to resolve closed definable subsets of full dimension in $\cO_F^n$. This is possible since such subsets can be defined (quantifier free) by terms. This is the first place where we actually need alterations and not just modifications, since definable sets can be of the form $\{x|\exists y \text{ such that  } x=y^n\}$ which can not be resolved just by modification. We next resolve a general definable function $f$ on a closed definable set $X\subset \cO_F^n$ of full dimension. For this we use the results from \cite{vdDHM} and  alternating the following two procedures that are possible because of the previous steps:
\begin{itemize}
\item Decompose $X$ to into definable subsets (of full dimension) and deal with (the closure of) each one separately.
\item Resolve any term that we need in order to resolve $f$.
\end{itemize}
Finally we do the general case by decomposing any definable set to graphs over definable sets of full dimension. Note that two kinds of terms are used: one without root functions (which suffice for quantifier elimination), and one with root functions (in which definable functions become piecewise terms). Some extra work is done to reduce the resolution of terms in the richer language to terms in the smaller language (without roots). Also in this reduction we need alterations and not just modifications.

In order to prove that  any zero locus of a $\cCexp$ function  can be a smooth locus of  a $\cCexp$ distribution (Theorem \ref{prop:conv}), we first show that a  zero locus of a $\cCexp$ function is also a  zero locus of a bounded $\cCexp$ function. Then we prove the theorem using the following statements:
\begin{itemize}
\item for any $\cCexp$ function $g$ on $F^n$ there is a stratification of $F^n$ by manifolds s.t. $g$ is smooth on each strata.
\item any submanifold $X$ of $F^n$ has a canonical measure with full support on $X$.
\end{itemize}

The uniform versions of Theorems \ref{thm:reg:F:one} and \ref{prop:conv} are proved in the same way as the theorems themselves. We deduce the uniform version of  Theorem \ref{thm:main} from results of  \cite{CHLR} about the uniform nature  of the Wave front of a $\cCexp$ distribution and  Theorem \ref{thm:main} itself.
%We will give uniform versions of Theorems \ref{thm:main}, \ref{thm:reg:F:one} and \ref{prop:conv} below in Section \ref{sec:unif}.

\section{Proof of the regularization}\label{s:reg}

To prove Theorem \ref{thm:reg:F:one} we will need to work with $\cCexp_\cL$-families of Schwartz Bruhat functions, which we now define, and which combines well with distributions of $\cCexp_\cL$-class by Proposition \ref{prop:Julia}.

\begin{defn}%[$\cCexp$-class families of distributions]
\label{def:fam:one}
Consider an $\cL$-manifold $X$ and let $Y$ be an $\cL$-definable set. A family $(\varphi_y)_{y\in Y}$ of functions $\varphi_y$ in $\cS(X)$ is called a $\cCexp_\cL$-family when the function
$$
(x,y)\in X\times Y \mapsto \varphi_y(x)
$$
lies in  $\cCexp_\cL(X\times Y)$.
\end{defn}

Theorem \ref{thm:reg:F:one} will follow from the following results.

\begin{prop}[\cite{CHLR}] %, Proposition 3.3.4 and Remark 4.3.3]
\label{prop:Julia}
Consider an $\cL$-manifold $X$ and let $Y$ be an $\cL$-definable set. Let $\xi$ be a distribution on $X$ of $\cCexp_\cL$-class and let  $(\varphi_y)_{y\in Y}$ be a $\cCexp_\cL$-family of functions $\varphi_y$ in $\cS(X)$ for some definable set $Y$.
Then the function  sending $y\in Y$ to $\xi(\varphi_y)$ is of $\cCexp_\cL$-class.
 \end{prop}
\begin{proof}
This follows from Proposition 3.3.4 and Remark 4.3.3 of \cite{CHLR}.
\end{proof}

\begin{lemma}[Definable Urysohn's Lemma]\label{lem:urysohn}
Let $X\subset F^n$ be an $\cL$-manifold and consider $\cL$-definable sets $Z\subset U\subset X$ such that $U$ is open in $X$ and $Z$ is closed in $X$. Then there exists an $\cL$-definable clopen $C\subset X$ such that $Z\subset C\subset U$. (A clopen set is a set which is open and closed.)
\end{lemma}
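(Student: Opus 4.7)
The plan is to build $C$ as a definable union of definable balls, exploiting the ultrametric structure of $F$. After handling the trivial cases $Z=\emptyset$ (take $C=\emptyset$) and $X=U$ (take $C=X$), for each $z\in U$ I would set
$$
\rho(z):=\min\{k\in\NN : B_k(z)\cap X \subseteq U\}.
$$
This minimum exists because $U$ is open in $X$, and the graph of $\rho$ is $\cL$-definable since it is cut out by a first-order condition in which $k$ may be quantified via a nonzero $t\in\cO_F$ of valuation $k$. I would then define
$$
C:=\{x\in X : \exists z\in Z,\ \ord(x-z)\geq \rho(z)\},
$$
which is manifestly $\cL$-definable. It contains $Z$ (take $z=x$, so $\ord(x-z)=+\infty$) and is contained in $U$ (any $x\in C$ lies in some $B_{\rho(z)}(z)\cap X\subseteq U$).

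That $C$ is open in $X$ is immediate from the non-archimedean identity $B_k(x)=B_k(z)$ whenever $\ord(x-z)\geq k$: if $x\in C$ with witness $z\in Z$, then for every $x'\in B_{\rho(z)}(x)\cap X$ the ultrametric inequality gives $\ord(x'-z)\geq \rho(z)$, so $x'\in C$ with the same witness $z$.

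The main obstacle, where I expect the real content to sit, is showing that $C$ is closed in $X$. I would take a sequence $x_n\to x$ with $x_n\in C$ and $x\in X$, and pick witnesses $z_n\in Z$ with $\ord(x_n-z_n)\geq \rho(z_n)$. After passing to a subsequence, either the values $\rho(z_n)$ are bounded by some $R\in\NN$, or $\rho(z_n)\to +\infty$. In the first case, choose $n$ large enough so that $\ord(x_n-x)\geq R$; the ultrametric inequality then yields $\ord(x-z_n)\geq \min(R,\rho(z_n))=\rho(z_n)$, so $z_n$ witnesses $x\in C$. In the second case, $\ord(x_n-z_n)\to +\infty$ forces $z_n\to x$, and since $Z$ is closed in $X$ and $x\in X$, we conclude $x\in Z\subseteq C$. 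The rest of the argument is just a direct definable translation of the ultrametric topology.
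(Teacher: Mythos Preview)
Your proof is correct and follows essentially the same strategy as the paper: define $C$ as the definable union over $z\in Z$ of suitably chosen balls around $z$ intersected with $X$, then verify closedness by a sequence argument that splits into the case where the ball radii stay bounded and the case where they shrink to zero (forcing the witnesses $z_n$ to converge to the limit point in $Z$). The only cosmetic differences are that the paper imposes an additional compactness condition on $B_x\cap X$ (which is not actually needed, since balls are closed in $F^n$ and the limit is already assumed in $X$) and organizes the dichotomy as ``the balls eventually coincide'' versus ``the balls are pairwise disjoint'' rather than your cleaner bounded/unbounded split on $\rho(z_n)$.
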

\begin{proof}
For any $x \in Z$ let $B_x$ be the maximal ball around $x$ satisfying
\begin{itemize}
\item $B_x$ is of radius $\leq 1$,
\item $B_x\cap X$ is compact,
\item $B_x \cap X\subset U$.
\end{itemize}
Note that such a ball $B_x$ exists for every $x\in Z$. Put
$$
C=\bigcup_{x\in Z} B_x \cap X.
$$
We obviously have $Z\subset C\subset U$ and that $C$ an $\cL$-definable open subset of $U$. It remains to prove that $C$ is closed in $X$.
Let $\alpha_i\in C$ be  such that $(\alpha_i)_{i\in\NN}$ is a converging sequence with limit $\alpha$ in $X$.  It is sufficient to show that $\alpha$ lies inside $C$. To this end, it is enough to find a converging subsequence with limit in $C$.  Let $z_i\in Z$ such that  $\alpha_i \in B_{z_i}$  for each $i$.
There are two cases two consider.
\begin{enumerate}[{Case} 1:]
\item the $B_{z_i}$ become identical to each other when $i$ is large enough.\\
In this case we can assume that all $\alpha_i$ are in one ball $B_{z_{i_0}}\cap X$ and the statement follows from the compactness of  $B_{z_{i_0}}\cap X$ and the fact that $B_{z_{i_0}}\cap X\subset C$.
\item Up to passing to a subsequence, the $B_{z_i}$ are pair-wise different.\\
Up to replacing by the subsequence and by the ultrametric, the $B_{z_i}$ are pairwise disjoint. On the other hand, $\alpha_i$ forms a Cauchy sequence. Thus, the radius of $B_{z_i}$ converges to $0$ when $i$ grows. This implies that $\lim z_i=\alpha$. Since $Z$ is closed in $X$, this implies that $\alpha\in Z$.
\end{enumerate}
\end{proof}

\begin{prop}[Partition of unity]\label{prop:part}
Let $X\subset F^n$ be an $\cL$-manifold and let $X=\bigcup_{i=1}^N U_i$ be a finite cover with $\cL$-definable open subsets $U_i$ of $X$. Then there exists a finite cover $X=\bigcup_{j=1}^{N'} U_j'$ with disjoint $\cL$-definable clopen sets refining the cover $X = \bigcup_{j=1}^{N}  U_i$. (Refining means that for any $j$, the set $U'_j$ is contained in some $U_i$.)
\end{prop}
\begin{proof}
For each $i=1,\ldots,N$, define $U_i'$ recursively to be an $\cL$-definable clopen  set given by the previous lemma (Lemma \ref{lem:urysohn}) such that
 $$\big(X \smallsetminus\bigcup_{j < i} U'_j \big) \smallsetminus\bigcup_{j > i} U_j  \subset U'_i \subset U_i.$$
This gives the desired refinement with $N'=N$.
\end{proof}

The following refines the results \cite[Theorem 1.1]{SvdD}, \cite[Theorem 3.14]{DvdD}, \cite[Proposition 1.5.3]{CCL} in the sense that our covers are moreover open.

\begin{lemma}\label{lem:imp}
Let $Y\subset F^n$ be an  $\cL$-submanifold. Then one can find a finite cover $Y=\bigcup U_i$ by open $\cL$-definable sets   $U_i$ such that each $U_i$ is a graph of an analytic $\cL$-definable function from an open subset $V_i \subset L_i$ of a linear subspace of $F^n$ of dimension $\dim Y$ to a linear complement of this subspace.
\end{lemma}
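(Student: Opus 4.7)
The strategy is to reduce, via the analytic implicit function theorem, to covering the open $\cL$-definable subsets of $Y$ where a single coordinate projection of dimension $m:=\dim Y$ is a local isomorphism, and then to partition each such subset by fiber cardinality and pick analytic sections sheet by sheet.

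For each $m$-element subset $I\subset\{1,\dots,n\}$, write $L_I\subset F^n$ for the associated coordinate subspace, $L_I^c$ for a complementary coordinate subspace, and $\pi_I:F^n\to L_I$ for the projection. Set
\[
A_I\ :=\ \{y\in Y:\ d(\pi_I|_Y)_y:T_yY\to L_I\text{ is an isomorphism}\}.
\]
Each $A_I$ is $\cL$-definable and open in $Y$, and since $T_yY$ is an $m$-dimensional subspace of $F^n$ at every $y\in Y$, the analytic implicit function theorem gives $Y=\bigcup_I A_I$. It therefore suffices to cover each nonempty $A_I$ by finitely many open $\cL$-definable graphs over open subsets of $L_I$.

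Fix such an $I$. By the analytic inverse function theorem, $\pi_I|_{A_I}:A_I\to L_I$ is a local analytic isomorphism onto the open set $V_I:=\pi_I(A_I)$, and its fibers are discrete. Using that $\cL$-definable finite-to-one maps between $\cL$-definable sets of the same dimension have fibers of uniformly bounded cardinality, there is an integer $N\geq1$ with $|\pi_I^{-1}(v)\cap A_I|\leq N$ for all $v\in V_I$. Local constancy of the fiber cardinality along a local analytic isomorphism between $\cL$-submanifolds makes the sets
\[
A_I^k\ :=\ \{y\in A_I:|\pi_I^{-1}(\pi_I(y))\cap A_I|=k\},\qquad k=1,\dots,N,
\]
$\cL$-definable, open in $A_I$, and disjoint with union $A_I$. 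On $A_I^k$ the restriction $\pi_I|_{A_I^k}:A_I^k\to V_I^k:=\pi_I(A_I^k)$ is a $k$-sheeted local analytic isomorphism with $V_I^k$ open in $L_I$, so it remains to produce $k$ disjoint $\cL$-definable analytic sections whose graphs partition $A_I^k$.

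\textbf{The main obstacle} is selecting these $k$ sections in a way that is both $\cL$-definable and \emph{continuous}, since a continuous section of a local analytic isomorphism is automatically analytic (and its graph is then automatically open in $A_I^k$), while a generic $\cL$-definable Skolem function need not be continuous. I would handle this by a further $\cL$-definable partition of $V_I^k$ that trivializes the $k$-sheeted cover analytically: for each $v\in V_I^k$ the $k$ preimages are distinct points of $F^n$ and can be $\cL$-definably ordered by lexicographic comparison of valuations of the coordinate differences from a fixed reference point; on each piece of an appropriate finite $\cL$-definable refinement of $V_I^k$ (obtainable via cell-decomposition style arguments in $\cL^F$ and $\cLan$, see e.g.\ \cite{Mac,DvdD,vdDHM}), this ordering is locally constant. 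The sections selected by the ordering then coincide locally with the analytic branches of the local inverse of $\pi_I$, hence are globally analytic; their graphs $U_{k,i}$ are the desired open $\cL$-definable graphs over open subsets of $L_I$. Collecting these $U_{k,i}$ over all $I$ and all $k,i\leq N$ yields the required finite cover of $Y$.
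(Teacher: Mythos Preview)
Your argument has a genuine gap at the step where you claim that the sets
\[
A_I^k=\{y\in A_I:\ |\pi_I^{-1}(\pi_I(y))\cap A_I|=k\}
\]
are open in $A_I$. Fiber cardinality of a local analytic isomorphism is only \emph{lower} semi-continuous on the base, not locally constant: nothing prevents extra sheets from appearing over nearby points. Concretely, take
\[
Y=\{(t,0):t\in\cO_F\}\ \cup\ \{(t,1):t\in\cO_F\setminus\{0\}\}\subset F^2,
\]
which is an $\cL$-definable analytic $1$-submanifold, and let $I=\{1\}$. Then $A_I=Y$, the fiber over $0$ has size $1$ while every nearby fiber has size $2$, so $A_I^1=\{(0,0)\}$ is not open. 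Once $A_I^k$ fails to be open, your $k$-sheeted ``cover'' $A_I^k\to V_I^k$ is not a local isomorphism between manifolds and the subsequent section argument no longer produces open graphs in $Y$. The lexicographic-ordering step is also underspecified: you would again need the pieces on which the ordering is constant to be \emph{open} in $V_I^k$, and no mechanism for this is given.

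The paper avoids both issues by a different route. It does not try to make sections continuous from the outset. Instead it takes arbitrary (possibly discontinuous) $\cL$-definable sections $s_i:p(Y)\to Y$ whose images cover $Y$, then stratifies $p(Y)$ into finitely many $\cL$-manifolds $S_j$ (which may well be lower-dimensional) on which each $s_i$ is analytic. The crucial step is then to \emph{thicken} each $s_i|_{S_j}$ to an analytic section on an open $\cL$-definable neighborhood of $S_j$ in $L$: for $x\in S_j$ one takes the maximal ball $B_{x,i}$ around $s_i(x)$ on which the projection is injective and uses its local inverse. This produces the required open graphs without ever invoking local constancy of fiber cardinality.
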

\begin{proof}
%We will prove the lemma by induction on $\dim Y$.
Since the case $\dim Y=n$ is obvious we will assume that  $\dim Y<n$.
For each $y\in Y$ there exist a coordinate subspace $L$ such that  the differential of the  projection $p$ from $Y$ to $L$ at the point $y$ is an analytic isomorphism and hence $p:Y\to L$ is a local analytic isomorphism around $y$ by the analytic inverse function theorem.  Without loss of generality we can pass to an open cover and assume that the same subspace $L$ can be used for all points $y\in Y$.
By existence of $\cL$-definable sections and since the cardinality of the fibers of $p:Y\to L$ is bounded (both statements follow e.g.~from the cell decomposition theorems from \cite{Denef2} \cite{Ccell}), we can find finitely many definable (not necessarily continuous) sections $s_i:p(Y)\to Y$ such that the images of the $s_i$ cover $Y$.

By \cite[Theorem 1.1]{SvdD}, \cite[Theorem 3.14]{DvdD} (or \cite[Proposition 1.5.3]{CCL}), we can partition $p(Y)=\bigcup_{j} S_{j}$ into finitely many $\cL$-manifolds such that for each $i,j$ the restriction $s_i|_{S_{j}}$ is analytic and such that $S_{j}$ is the graph of an $\cL$-analytic function from an open $W_{j}\subset L_{j}$ to $L_{j}'$ where $L=L_{j} \oplus L_{j}'$ as linear spaces.
It suffices to show that, up to refining the partition $\bigcup_{j} S_{j}$, one can extend $s_i|_{S_{j}}$ analytically to an open $\cL$-definable neighborhood $V_{ij}$ of $S_{j}$ in $L$.

%By induction on $n$, we may suppose for each $i,j$ that  there is a finite open $\cL$-definable cover of $S_{ij}$ by graphs $U_{ijk}$ of $\cL$-analytic functions from $W_{ijk}\subset L_{ijk}$ to $L_{ijk}'$ where $L=L_{ijk} \oplus L_{ijk}'$.

To this end, fix $i$ and $j$, and, for any $x \in S_{j}$, let $B_{x,i}$ be the maximal ball in $F^n$ around $s_i(x)$ and of radius at most $1$ such that $p|_{B_{x,i} \cap Y}$ is injective. Let $\nu_{x,i}$ be the inverse of $p|_{B_{x,i} \cap Y}: B_{x,i} \cap Y \to p(B_{x,i} \cap Y)$ and  put $V_{ijx}:=p(B_{x,i} \cap Y)  \cap (x+L'_{j})$. %Let $\nu_{i,x}':=(\nu_{i,x})|_{V_{ijx}}$.
We now obtain an extension of  $s_{i}|_{S_{j}}$ to the open $\cL$-definable set $V_{ij} := \bigcup_{x\in S_{j} }  V_{ijx}$ as needed, namely, sending $x+y$ in $V_{ij}$ with $x\in S_{j}$ and $y\in L'_{j}$ to $\nu_{x,i}(x+y)$.  % sending $x$ in $V_{ijk}$ to .
\end{proof}

\begin{prop}\label{thm:sec:F:one}
Consider an $\cL$-manifold $X$.
Let $U$ be a nonempty $\cL$-definable open subset of $X$, let $\xi$ be a distribution on $U$ of $\cCexp_\cL$-class and write $Z$ for the complement of $U$ inside $X$.
Suppose that $Z$ is an $\cL$-submanifold of $X$.
Then the restriction map
$$
\cS(X)\to \cS(Z)
$$
admits a linear section
$$
\nu: \cS(Z)\to \cS(X)
$$
that maps $\cCexp_\cL$-class families to $\cCexp_\cL$-class families. (Namely, if  $(\varphi_y)_{y\in Y}$ is a  $\cCexp_\cL$-class family of functions $\varphi_y$ in $\cS(Z)$, then  $(\nu(\varphi_y))_{y\in Y}$ is a  $\cCexp_\cL$-class family of functions $\varphi_y$ in $\cS(X)$.)
%(The restriction maps are obtained by restricting a Schwartz-Bruhat function $\varphi$ on $X$ to $Z$.)
%Moreover, this holds uniformly in $F$, in $\psi$, and in families (see Definition \ref{unif}).
% $\nu$.
\end{prop}

\begin{proof}%[Proof of Proposition \ref{thm:sec:F:one}]
Let $n$ be such that $X\subset F^n$.
$ $
\begin{enumerate}[{Case} 1:]
\item $X$ is open in $F^n$ and $Z$ is a graph of a map $\phi$ from an open $V\subset F^{k}$ to  $F^{n-k}$.\\
For any $z\in V$ let $B_z$ be the  maximal ball in $F^n$ of radius at most $1$ around $(z,\phi(z))$ that is contained in $X$. Let $p:F^n\to F^k$ be the coordinate projection so that $p(Z)=V$.
For a function $f \in \Sc(Z)$ and $x \in X$ define
$$\nu(f)(x):=f(p(x), \phi(p(x)))\cdot \11_{B_{p(x)}}(x).$$
It is easy to see that $\nu$ is  a section as desired.
\item $X$ is open in $F^n$.\\
By Lemma \ref{lem:imp} we can find a finite $\cL$-definable open cover $Z=\bigcup U_i$ such that each  $U_i$ is a graph of an analytic map from an open subset of  $L_i$ to  $L_i'$ where $F^n=L_i\oplus L_i'$ is a decomposition to vector spaces.
We can find $\cL$-definable open sets $V_i \subset X$ such that $U_i=V_i\cap Z$. Adding the set $X\smallsetminus Z$ we obtain a finite open cover of $X$. Applying partition of unity (Proposition \ref{prop:part}) to this cover we reduce to the previous step.
\item General case.\\
By Lemma \ref{lem:imp} again, we can cover $X$ by finitely many open $\cL$-sets each of which is isomorphic  to an open $\cL$-definable subset of $F^k$ with $k$ the dimension of $X$. Again Applying partition of unity (Proposition \ref{prop:part}) to this cover we reduce to the previous step.
\end{enumerate}
%Let $T$ be the subset of $X$ consisting of points $t$ in $X$ satisfying that, among all points $z$ in $Z$ with minimal distance to $t$ and with (normative) distance at most $1$, there is a unique point $z$ with extra the property that $z$ and $t$ have %at least $\ell$
%a maximal number of coordinates in common. Then $T$ is a definable open and closed neighborhood of $Z$ in $X$, and by the uniqueness property there is a definable funcion $\tau:T\to Z$ which is moreover continuous. (Explain.) Moreover, for each Schwartz-Bruhat function $\varphi$ on $Z$, the function $\varphi\circ \tau$ is a Schwartz-Bruhat function on $T$, hence, by extending by zero outside $T$, it gives a Schwartz-Bruhat function on $X$.
\end{proof}

\begin{proof}[Proof of Theorem \ref{thm:reg:F:one}]
Let $Z$ be the complement of $U$ in $X$.
\begin{enumerate}[{Case} 1.]

\item $Z$ is a closed $\cL$-submanifold of $X$. \\
Let $\nu$ be the section obtained from Proposition \ref{thm:sec:F:one}.  To $\varphi$ in $\cS(X)$ we associate $\widetilde \varphi$ in $\cS(U)$ by defining $\widetilde \varphi$  as the restriction of $\varphi-\nu(\varphi|_{Z})$ to $U$.
Now define the section $\kappa$ of $\cS^* (X) \to \cS^*(U)$  by sending $\xi$ in $\cS^* (U)$ to the distribution $\kappa(\xi) = \xi_X$ where $\xi_X(\varphi)$ for any $\varphi$ in  $\cS(X)$  is defined as $\xi(\widetilde \varphi)$. That $\xi_X$ is a distribution of class  $\cCexp_\cL$  now follows from Proposition \ref{prop:Julia} and the fact $\nu$ maps $\cCexp_\cL$-class families to $\cCexp_\cL$-class families. Linearity is clear by construction.
\\  %3.3.4 of \cite{CHLR}. \\

\item General case.\\
By Theorem \cite[Theorem 4.2.5]{CCL} (more concisely, by the combination of the frontier condition for stratifications and \cite[Theorem 1.1]{SvdD}, \cite[Theorem 3.14]{DvdD} as showed in particular in \cite{CCL}),
there exist definable sets $Z_i$ for $i=0,\ldots, k $ for some $k>0$ such that $Z=Z_0\supset Z_1 \supset \cdots \supset Z_n=\emptyset$ and such that $Z_{i}\smallsetminus Z_{i+1}$ is a closed $\cL$-submanifold of $Z_i$. Put $U_i:=X \smallsetminus Z_i$. By composition it is enough to prove that   $\cS^*(U_{i+1})\to \cS^*(U_i)$ admits a section $\kappa_i$ as desired, but this follows from the previous case.
\end{enumerate}
\end{proof}

\section{Proof of resolution for definable sets and functions}\label{sec:resolution}
We will prove our variant of resolution of singularities for $\cLan$-definable functions.
This is a resolution by alterations rather then by modifications
in the sense that finite fibers are allowed.
For $\cLan$-definable functions which are moreover compositions of converging power series and restricted division, similar resolution results have been obtained in \cite{DvdD}, using Hironaka's embedded resolution of singularities from  \cite{Hir:Res}. Similar to Hironaka's result, one of our main purposes is to make the pull-back of the definable function a product of a monomial with a unit.
Since in the $p$-adic case, fragments of $r$-th root functions are definable for integers $r>0$, we will need to combine power maps and monomialization to get our resolution result. %we will compose with power maps to  and therefor our resolution result looks a bit different than usual embedded resolution.
% , leading to the concept of sectors. The analogue, on sectors , of a monomial times a unit is called a sector function.
For us, after monomialization, the unit will still have an important role, since it can in general not be neglected inside the argument of the additive character $\psi$. Again because of the argument inside $\psi$, to prove Theorem \ref{thm:main} will require additional work, even  after resolving singularities, and will not reduce directly to a Cartesian product situation.

Write $\cL_{\rm an, qe}^{F}$ for $\cLan$ together with a function symbol for field division sending nonzero $x$ to $x^{-1}$ and $0$ to $0$, and relation symbols $P_n$ for each $n>0$ for the set of nonzero $n$-th powers in $F$.
%By the variant from \cite{vdDHM} of the quantifier elimination result from \cite{DvdD}, the structure $F$ has quantifier elimination in the language $\cL_{\rm an, qe}^{F}$.
Similarly, write $\cL_{\rm an, D}^{F}$ for the language $\cLan$ together with a function symbol for restricted division $D$ sending $(x,y)\in\cO_F^2$ to $x/y$ when $|x|\leq |y|\not=0$ and to zero otherwise, and relation symbols $P_n$ for each $n>0$ for the set of nonzero $n$-th powers in $\cO_F$. By the variant from \cite{vdDHM} of the quantifier elimination result from \cite{DvdD}, the structure $F$, resp.~$\cO_F$, has quantifier elimination in the language $\cL_{\rm an, qe}^{F}$, resp.~$\cL_{\rm an, D}^{F}$.

We will derive Theorem \ref{thm:res:def} from the resolution results (2.2) and (2.4) from \cite{DvdD}, and the piecewise description of definable functions by terms in a slightly larger language. Recall that a term in a language is  a finite composition of function symbols from the language. Similarly as for quantifier elimination, one needs an adapted language  to ensure that definable functions are piecewise equal to terms (and, to ensure that definable sets are given by quantifier free formulas). See Definition \ref{def:unit:mon} for the notions of analytic units and monomials on $\cO_F^m$.

%Recall that an analytic function $\cO_F^m\to\cO_F$ is called an analytic unit if it is given by a power series which is an invertible element in the ring $\cO_F\langle x_1,\ldots,x_m \rangle$.

\begin{prop}[\cite{DvdD}]\label{prop:Hir}
Let $f:\cO_F^m\to \cO_F^k$ be an analytic map. Then there exists a compact analytic manifold $C$ and an analytic map $h:C\to \cO_F^m$ such that $h$ is an analytic isomorphism on the preimage of a dense open subset of $\cO_F^m$ and such that
for each $c\in C$ there is an open neighborhood $O_c$ of $c$ and an analytic isomorphism $\varphi_c:\cO_F^m\to O_c$ such that %$h\circ \varphi_c$ is  $\cLan$-definable
for each $i=1,\ldots,k$,
$$
f_i\circ h\circ \varphi_c = u_{ic}M_{ic}
$$
where $u_{ic}$ is an analytic unit on $\cO_F^m$ and $M_{ic}$ is a monomial on $\cO_F^m$.
\end{prop}
\begin{proof}
The case that $F=\QQ_p$ is the resolution result (Theorem 2.2) from \cite{DvdD}.
The same proof applies to any finite field extension $F$ of $\QQ_p$, based on the excellence result (Theorem 1.2) of \cite{GrothDieu} and Hironaka's results from \cite{Hir:Res}, as explained in \cite{DvdD}.
\end{proof}

\begin{prop}[\cite{DvdD}]\label{prop:unif}
Let $f:\cO_F^m\to \cO_F^k$ be a map whose component functions are $\cL_{\rm an, D}^{F}$-terms, for some $m>0$ and $k>0$.  Then there exists a compact analytic manifold $C$ and an analytic map $h:C\to \cO_F^m$ such that $f\circ h$ is analytic.
\end{prop}
\begin{proof}
In \cite{DvdD}, proof of (2.4), it is shown (but not stated) that $f\circ h$ can be supposed to be analytic on $C$ when $F=\QQ_p$. The same proof applies for any finite field extension $F$ of $\QQ_p$.
\end{proof}

The following result relates $\cLan$-definable functions to terms, at the cost of taking powers.
\begin{prop}[\cite{vdDHM}]\label{prop:thm5.5}
Let $f_i:X\subset F^n\to F$ be finitely many $\cLan$-definable functions. Then there is an integer $M>0$ and there are finitely many disjoint definable subsets $X_s$ of $X$ and finitely many definable functions $f_{i s \ell}$ on $X_s$ such that
$$
f_{is \ell }^{M}
$$
is given by an $\cL_{\rm an, qe}^F$-term for each $i$, $s$,  $\ell$ and such that
$$
\sum\limits_\ell f_{is \ell } = f_i  \mbox{ on $X_s$}.
$$
\end{prop}
\begin{proof}
This follows from Theorem 5.5 of \cite{vdDHM} and its proof. % based on \cite{Prestel}.
\end{proof} 

We will use Proposition \ref{prop:thm5.5} only once, namely in Case 3 of the proof of Theorem \ref{thm:res:def}. One can also treat that case in an alternative way and avoid the use of Proposition \ref{prop:thm5.5} by proceeding instead by induction on the complexity of terms based on the weaker result (than Proposition \ref{prop:thm5.5}) that $\cLan$-definable functions are piecewise equal to terms in an expansion of $\cL_{\rm an, qe}^F$-terms with root-functions, see \cite[Theorem 7.5]{CLR}.

The following (basic) lemma states that $\cL_{\rm an, qe}^{F}$-terms correspond piecewise to
$\cL_{\rm an, D}^{F}$-terms. The lemma after this one gives a similar result for definable sets.
%The $\cL_{\rm an, qe}^{F}$-terms correspond piecewise to
%$\cL_{\rm an, D}^{F}$-terms, as follows, and similarly for definable sets.
%\begin{lem}\label{lem:DDterm}
%Consider finitely many $\cL_{\rm an, qe}^{F}$-terms $t_i(x)$ in $n$ variables $x_i$. Assume that $t_i(x)\in\cO_F$ for any $x\in \cO_F^n$. Then there exist finitely many $\cL_{\rm an, D}^{F}$-terms $t_{ij}$ such that for each $i$ and each $x\in \cO_F$ one has
%$$
%t_i(x) = t_{ij}(x)
%$$
%for some $j$.
%\end{lem}
%\begin{proof}
%Consider the language $\cL'$ which is the union $\cL_{\rm an, qe}^{F}\cup \cL_{\rm an, D}^{F}$.  ??
%\end{proof}

%\RamiQ{I think it is better to formulate it for one trerm. I also generalize the lemma slightly}

\begin{lem}\label{lem:DDterm}
Consider a tuple $t(x)$ of  $\cL_{\rm an, qe}^{F}$-terms in $n$ variables $x_i$.
Let $X \subset \cO_F^n$ be an $\cL_{\rm an, qe}^{F}$-definable set.
Assume that
$t(x)\in \cO_F^s$ for each $x\in X$. Then there exist finitely many tuples $t_j$ of $\cL_{\rm an, D}^{F}$-terms such that for each $x\in X$ one has
$$
t(x) = t_{j}(x)
$$
for some $j$.
\end{lem}

%\begin{lem}\label{lem:DDterm}
%Consider an $\cL_{\rm an, qe}^{F}$-term $t(x)$ in $n$ variables $x_i$.

%Asume that
%$t(x)\in O_F$ for any $x\in  O_F^n$. Then there exist finitely many $\cL_{\rm an, D}^{F}$-terms $t_{j}$ such that for each $x\in O_F^n$ one has
%$$
%t(x) = t_{j}(x)
%$$
%for some $j$.
%\end{lem}

%\RamiQ{Now I think that it is true not peace-wise but globaly. I'll try to prove it in this formulation}
%\begin{lem}\label{lem:DDterm}
%Consider an $\cL_{\rm an, qe}^{F}$-term $t$ in $n$ variables $x_i$.

%Asume that
%$t(x)\in O_F^n$ for any $x\in X$. Then there exist a  $\cL_{\rm an, D}^{F}$-term $t'$   such that for each $x\in O_F^n$ one has
%$$
%t(x) = t_{j}(x)
%$$
%\end{lem}

\begin{proof}
Since a term by definition is a composition of function symbols in the language, any $\cL_{\rm an, qe}^{F}$-term can be  written as composition of functions $f_i:F^l\to F^k$ such that each of the $f_i$ satisfies one of the following:
\begin{enumerate}
\item\label{it:term.1} all the components of $f_i$ are rational functions of the form $p\cdot (q)^{-1}$ for some polynomials $p$ and $q$,
\item\label{it:term.2} all the components of $f_i$ are restricted analytic functions.
\end{enumerate}
%For any integer $k$ let $\alpha_{k}:F^k \to F^k$ be defined by $\alpha_{k}(x)=x\cdot \11_{\cO_F^k}(x)$.
%Note that the values of the term $\alpha_{s}\circ t\circ \alpha_{n}$ coincide with $t$ on $\cO_F^n$. Thus,
%we may assume that the first and the last $f_i$ satisfy (\ref{it:term.2}).
Since a composition of rational functions as in (\ref{it:term.1}) is piecewise equal to a rational function as in (\ref{it:term.1}) (with definable pieces), we may suppose that
$$
t=a_1\circ b_1 \circ \cdots \circ a_N\circ b_N \circ  a_{N+1}
$$ where the $a_i$  satisfy (\ref{it:term.2}) and the $b_i$ satisfy (\ref{it:term.1}).
For any rational function $r$ of the form $p\cdot (q)^{-1}$ for some polynomials $p$ and $q$ write $r_D$ for the function $D(p,q)$, where $D$ stands for restricted division. Likewise, for a tuple $r$ of such rational functions $r_i$ write $r_D$ for the corresponding tuple of the $r_{i,D}$.
Note that $a_1\circ b_1 (z) $ equals $a_1 \circ b_{1,D}(z)$ for any $z$ in $\cO_F^\beta$ with $\beta$ the arity of $b_1$. The lemma now follows easily by induction on $N$, by focusing on the remaining part $a_2\circ b_2 \circ \cdots \circ a_N\circ b_N \circ  a_{N+1}$. %
\end{proof}

\begin{lem}\label{lem:DDset}
Any $\cL_{\rm an, qe}^{F}$-definable set $X\subset \cO_F^n$ is $\cL_{\rm an, D}^{F}$-definable and vice versa.
\end{lem}
\begin{proof}
Clearly any $\cL_{\rm an, D}^{F}$-definable set in $\cO_F^n$ is also $\cL_{\rm an, qe}^{F}$-definable. For the other direction consider the $\cL_{\rm an, qe}^{F}$-definable embedding $i:F\to \cO_F^2$ given by

$$
i(x)=
\begin{cases}
(x,0) & \text{ if } x\in \cO_F ,\\
(x^{-1},1)  & \text{ otherwise.}
\end{cases}
$$
The assertion now  follows from
%\blue{quantifier elimination for the structure $\cO_F$ in the language $\cL_{\rm an, D}^{F}$} from \cite{DvdD} and from
the following simple observations:
\begin{itemize}
\item There is an  $\cL_{\rm an, D}^{F}$-definable function $a:\cO_F^4\to \cO_F^4$ such that  for any $x,y\in F$ we have $a(i(x),i(y))=i(x+y)$.

\item There is an  $\cL_{\rm an, D}^{F}$-definable function $m:\cO_F^4\to \cO_F^4$ such that  for any $x,y\in F$ we have $m(i(x),i(y))=i(xy)$.

\item for any restricted analytic $f$ function on $F$, there exist an  $\cL_{\rm an, D}^{F}$-definable function $A_f:\cO_F^2\to \cO_F^2$ such that  for any $x\in F$ we have $A_f(i(x))=i(f(x))$.

\item There is an  $\cL_{\rm an, D}^{F}$-definable function $inv:\cO_F^2\to \cO_F^2$ such that  for any $x\in F$ we have $inv(i(x))=i(x^{-1})$ (recall that $x^{-1}$ is interpreted as $0$ if $x=0$).
\item For any integer $n$ There is a  $\cL_{\rm an, D}^{F}$-definable set $\Pi_n\subset \cO_F^2$ such that  for any $x\in F$ we have $i(x)\in\Pi_n$ if and only if $x\in P_n$.
\end{itemize}
\end{proof}

Finally, in order to prove Part \eqref{thm:res:def:5} of the resolution theorem we will use the following standard result.
%\RamiQ{Is it actualy well-known? May be to use some other word (e.g. standard)}
\begin{lemma}\label{lem:swal.un} Let $u$ be an analytic unit on $\O_F^n$ and $M$ be a monomial on $\O_F^n$. Consider the function $f=uM$ on $\O_F^n$. Then there exist a finite disjoint $\cL$-definable open cover $\O_F^n=\bigcup_{i=1}^N U_i$ and $\cL$-definable analytic isomorphisms  $\phi_i:\O_F^n\to U_i$ such that
\begin{itemize}
\item $\phi_i^*(f)$ is either a monomial on $\O_F^n$, or, a constant times an analytic unit on $\O_F^n$,
\item for any analytic unit $u'$ on $\cO_F^n$ and any monomial $M'$ on $\cO_F^n$, the function $\phi_i^*(u'M')$ equals the product of  a monomial with an analytic unit on $\cO_F^n$.
\end{itemize}
\end{lemma}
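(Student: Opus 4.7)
The plan is to cover $\cO_F^n$ by a finite disjoint family of definable balls, parameterize each ball by an analytic isomorphism with $\cO_F^n$, and where needed compose with a further change of coordinates that absorbs the analytic unit factor of $f$ into its monomial factor. I first dispose of two trivial cases with $\phi = \mathrm{id}$: if $d = 0$ then $f \equiv 0$ is the zero monomial, and if $S := \{i : \mu_i > 0\}$ is empty then $f = du$ is already a constant times an analytic unit.

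Otherwise, pick a set $R \subset \cO_F$ of residue-class representatives modulo $\pi$ with $0 \in R$, and form the $\cL$-definable disjoint open cover $\cO_F^n = \bigsqcup_{\mathbf{a} \in R^n}(\mathbf{a} + \pi \cO_F^n)$ via the analytic isomorphisms $\phi_{\mathbf{a}}(y) = \mathbf{a} + \pi y$. Letting $T_{\mathbf{a}} := \{i \in S : a_i = 0\}$, a direct expansion gives
\[
\phi_{\mathbf{a}}^* f(y) = c_{\mathbf{a}} \, u'_{\mathbf{a}}(y) \prod_{i \in T_{\mathbf{a}}} y_i^{\mu_i}, \qquad c_{\mathbf{a}} = d\,\pi^{\sum_{i \in T_{\mathbf{a}}} \mu_i},
\]
with $u'_{\mathbf{a}}(y) := u(\mathbf{a} + \pi y)\prod_{i \in S \setminus T_{\mathbf{a}}}(a_i + \pi y_i)^{\mu_i}$ an analytic unit (its constant term in $y$ is $u(\mathbf{a})\prod_{i \in S \setminus T_{\mathbf{a}}} a_i^{\mu_i} \in \cO_F^\times$, and higher-order coefficients lie in $\cM_F$). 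When $T_{\mathbf{a}} = \emptyset$ this is already a constant times an analytic unit, so $\phi_{\mathbf{a}}$ itself serves as the desired map on this ball.

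When $T_{\mathbf{a}} \neq \emptyset$ I would compose with an analytic automorphism $\psi$ of $\cO_F^n$ that absorbs $u'_{\mathbf{a}}$ into the monomial. Fix $i_0 \in T_{\mathbf{a}}$ and let $\alpha \in \cO_F^\times$ be a representative of the coset of $u'_{\mathbf{a}}(0)$ in the finite group $\cO_F^\times / (\cO_F^\times)^{\mu_{i_0}}$, which is $\cL$-definable via the existential formula for nonzero $\mu_{i_0}$-th powers. Produce an analytic unit $h$ on $\cO_F^n$ satisfying $h^{\mu_{i_0}} = u'_{\mathbf{a}}/\alpha$ by combining a constant $\beta \in \cO_F^\times$ with $\beta^{\mu_{i_0}} = u'_{\mathbf{a}}(0)/\alpha$ and the binomial series $(1 + w)^{1/\mu_{i_0}}$ for $w := u'_{\mathbf{a}}/u'_{\mathbf{a}}(0) - 1$, which converges in the Tate algebra $\cO_F\langle y\rangle$ once $|w|$ is small enough to dominate the $p$-adic denominators of the binomial coefficients; in the wildly ramified case $p_F \mid \mu_{i_0}$ this forces refining the initial ball decomposition to balls of radius $|\pi|^N$ for some $N$ depending only on the finitely many exponents of $f$ and the ramification of $F$. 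Defining $\psi: y \mapsto z$ by $z_{i_0} := y_{i_0} h(y)$ and $z_j := y_j$ for $j \neq i_0$ yields an analytic automorphism of $\cO_F^n$ by the ultrametric inverse function theorem, since its Jacobian determinant equals $h + y_{i_0}\,\partial h/\partial y_{i_0}$, which stays a unit throughout $\cO_F^n$ (both $h - h(0)$ and $y_{i_0}\,\partial h/\partial y_{i_0}$ have Tate norm $\leq |\pi|$). A direct substitution then gives $(\phi_{\mathbf{a}} \circ \psi^{-1})^* f = (\alpha c_{\mathbf{a}}) \prod_{i \in T_{\mathbf{a}}} z_i^{\mu_i}$, a pure monomial.

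The second bullet then follows automatically from the construction: applying the same ball-decomposition computation to any $u^\sharp M^\sharp$ in place of $uM$ shows that $\phi_{\mathbf{a}}^*(u^\sharp M^\sharp)$ is already of the form (constant)$\cdot$(analytic unit)$\cdot$(monomial), and the subsequent automorphism $\psi^{-1}$ preserves this class, since units pull back to units under an analytic isomorphism of $\cO_F^n$ and $y_{i_0}^{\nu_{i_0}} = z_{i_0}^{\nu_{i_0}} h(\psi^{-1}(z))^{-\nu_{i_0}}$ is a monomial in $z$ times an analytic unit. The chief technical obstacle is controlling the convergence of the binomial series defining $h$ in the presence of wild ramification, which is exactly what dictates the necessary refinement level $N$ of the ball decomposition and is resolved by a single uniform choice of $N$ depending only on the fixed exponents $\mu_i$ of $f$.
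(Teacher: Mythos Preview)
Your proof is correct and follows the same strategy as the paper's: cover $\cO_F^n$ by small balls via affine charts, and on balls where the monomial part survives, absorb the unit into it via a substitution $y_{i_0}\mapsto y_{i_0}h(y)$ with $h$ a $\mu_{i_0}$-th root of the unit. The paper handles the global bijectivity of this last substitution by citing Igusa's notion of special restricted power series, whereas you argue via the Tate-norm estimate $\|h-h(0)\|\le|\pi|$ (which is exactly Igusa's hypothesis, so your ``ultrametric inverse function theorem'' appeal is fine once read this way); you are also more explicit than the paper about the binomial-series construction of $h$ and the refinement of the ball radius needed in the wildly ramified case.
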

%\RamiQ{This lemma does not give a disjoint cover. I do not think that a disjoint cover is posibal in geberal. if the power siris in the analitic unit are with coefitiants in $O_F$ then it is posibal. Otherwise it is posibal only if the local charts have not round sores.}
\begin{proof}
Let us first treat the special case that $M(x_1,\dots,x_n)=x_1^{k_1}\cdots x_n^{k_n}$ for some natural numbers $k_i$, that $k_1\geq 1$, and, that there is an analytic unit $v$ on $\cO_F^n$ such that $v^{k_1}=u$ and such that $x_1v(x)$ is a special restricted power series in the sense of \cite[Section 2.2]{Igusa:intro}.
Consider the map $\psi:\cO_F^n\to \O_F^n$ given by
$$\psi(x_1,\dots, x_n)=(x_1v(x_1,\dots,x_n),x_2,\dots,x_n).$$
By \cite[Corollary 2.2.1]{Igusa:intro}, the map $\psi$ is an analytic isomorphism, say, with inverse $\phi$. Then $\phi_1=\phi$ is as desired (with $N=1$), in particular, one has $\phi^*(f)=M$.

Let us now reduce to the conditions of the special case. Fix $a=(a_1,\dots,a_n) \in \O_F^n$. Since any two balls in $\O_F^n$ are either disjoint or contained in one another and by compactness of $\O_F^n$, it is enough to find a ball $B_a$ around $a$ and an $\cL$-definable analytic isomorphism $\phi_a :\O_F^n \to B_a$ satisfying both conditions of the lemma.
%\begin{itemize}
%\item $\phi^*(f)$ is either a monomial or a constant times an analytic unit.
%\item for any analytic unit $u'$ on $\cO_F^n$ and any monomial $M'$ on $\O_F^n$ the function $\phi^*(u'M')$ can be written as a product of  a monomial with an analytic unit.
%\end{itemize}
We may assume that there is $i$ with $a_i= 0$ and that the monomial $M$ depends nontrivially on the coordinate $x_i$. Indeed,  otherwise we can take a small enough ball $B_a$ around $a$ and $\phi$ to be a homothety after a translation to make $\phi^*(f)$ an analytic unit times a constant. Without loss of generality we may assume that $i=1$. Also we can assume that $a_1=\cdots a_k=0$ for some $k\geq 1$ and $a_{k+1},\cdots,a_n\neq 0$.
Write $$M(x_1,\dots,x_n)=x_1^{k_1}\cdots x_n^{k_n}.$$ Let $L:=(x_1,\dots,x_n):=x_{1}^{k_{1}}\cdots x_m^{k_m}$ and $K(x_1,\dots,x_n):=x_{m+1}^{k_{m+1}}\cdots x_n^{k_n}.$ We have $M=KL$. Note that $K$ is a unit in a small ball $B'$ around  $a$.
%Also, given any integer $\ell>0$, by taking $B'$ small enough, we may suppose that the residue classes of $K(x)$ and of $u(x)$ in $F^\times$ modulo the $p_F^\ell$th powers is constant for $x\in B'$.
%Assume that the value of $\ell$ is large enough so that $v$ Let $v$ be the function on $\cO_F^n$ such that  $\lambda v(x)^{k_{1}} := u(x)$.
Consider the affine transformation $t$ which is a homothety after a translation and which maps $0$ to $a$ and $\O_F^n$ onto $B'$. Up to choosing $B'$ small enough, the pull-back $t^*(f)$ is of the form as in our special case, and, the lemma now follows from this special case applied to $t^*(f)$.
\end{proof}

We can now give the complete proof of Theorem \ref{thm:res:def}.

\begin{proof}[Proof of Theorem \ref{thm:res:def}]
It is enough to prove only statements \eqref{thm:res:def:1} -- \eqref{thm:res:def:4} since statement \eqref{thm:res:def:5} can be deduced from them by lemma \ref{lem:swal.un} (and refining the cover $U_i$ and changing the maps $\phi_i$).
%\RamiQ{for this to work we need ti let go the requierment of disjointness or to make some other work-around}
$ $
\begin{enumerate}[{Case} 1:]

\item $X=\cO_F^n$ and the $f_i$ are $\cL_{\rm an, D}^F$-terms.

This case follows from Propositions \ref{prop:Hir} and \ref{prop:unif}. Indeed, the disjointness of the $\phi_i(U)$ is easily obtained on top of the conclusions of \ref{prop:Hir} and \ref{prop:unif}.
%$ $
%\RafQ{?? put here or later?}
%For the moreover statement, suppose that $x_1$ appears nontrivially in $M$; we may assume that $\ac_k (u)$ is constant for a chosen $k$; for $v$ a root of a multiple of $u$, note that $x\mapsto (v(x)x_1,x_2,\ldots,x_m)$ gives a proper analytic transformation of $\cO_F^m$; take the pull-back along the inverse transformation to swallow the unit.
%
%$ $
%\item

\item $X$ is of dimension $m=n$ and $f$ is constant.

By Lemma \ref{lem:DDset} and quantifier elimination in $\cL_{\rm an, D}^F$, the set $X$ is defined by a finite Boolean combination of condition of the form $t_i(x)\in P_{n_i}$
%\RamiQ{what about equalities?}
%\RafQ{An equality $t_1=t_2$ in $\cO_F$ is the same as $t_1/t_2-1$ not being in $P_1$.}
for some $\cL_{\rm an, D}^F$-terms $t_i$ and some $n_i\geq 1$.  (Note that a condition $t=0$ corresponds to $t$ not being in $P_1$.) Let $N$ be the product of all occurring $n_i$.

By the previous case we can assume that
%Now apply Propositions \ref{prop:Hir} and \ref{prop:unif} to
the terms $t_i$
%to make the terms
are analytic units times monomials.
%Choosing an open cover
 %By making the open neighborhoods smaller if necessary,
 Without loss of generality we can suppose that the occurring units have constant coset in $F^\times$ modulo $P_N$.
 For any $\lambda=(\lambda_1,\dots, \lambda_n)\in (\O_F\smallsetminus\{0\})^n$ define $\phi_\lambda:\O_F^n\to \O_F^n$ by $\phi_\lambda(x_1,\dots, x_n)=(\lambda_1x_1^N,\dots, \lambda_n x_n^N)$.
 We can find finitely many $\lambda^1,\dots \lambda^K\in \O_F^n$ such that  the images of $\phi_{\lambda^j}$ cover the entire $\O_F^n$ and such that the sets $\phi_{\lambda^j}(U)$ are disjoint.
 Thus $\dim \O_F^n \smallsetminus \bigcup_j \phi_{\lambda^j}(U)<n$. This implies that $\dim X \smallsetminus \bigcup_j \phi_{\lambda^j}(U)<n=\dim X$ and thus (since $X$ has pure dimension) $ \bigcup_j \phi_{\lambda^j}(U)\cap X$ is dense in $X$. Note that for any $j$, either $\phi_{\lambda^j}(U) \subset X$ or  $\phi_{\lambda^j}(U) \cap X=\emptyset$. We obtained that the collection $$\{\phi_{\lambda^j}|\phi_{\lambda^j}(U) \subset X\}$$ meets the requirements.
 %Now, by composing with $N$-th power maps in each coordinate, taking multiples, and working piecewise, we can make the truth value of the pull-backs of the conditions $t_i(x)\in P_{n_i}$ constant on the open $U$.
%\RafQ{Explain use of top dimension here?? }
%    Now we are done.

\item\label{c:ful.dim} $X$ is of dimension $m= n$ and $f$ is general.

By Lemma \ref{lem:DDterm} and Proposition \ref{prop:thm5.5},  each component function of $f$ is piecewise equal to a sum of definable roots of $\cL_{\rm an, D}^F$-terms $t_{ij}$. We may suppose that there is only one piece, let's still call it $X$.
By Case 2 we can suppose that $X=\cO_F^n$. (Indeed, apply Case 2 to $X$ and work with one chart $\phi_i$ and the pull-back of $f$ along $\phi_i$.) Now apply Case 1 to the terms $t_{ij}$. By composing with $N$-th power maps (as the $\phi_\lambda$ of Case 2) for some highly divisible $N$, we reduce to the case that the $t_{ij}$ are  $\cL_{\rm an, D}^F$-terms, and hence, also that $f$ is given by a tuple of $\cL_{\rm an, D}^F$-terms. Now we end by Case 1 for these $\cL_{\rm an, D}^F$-terms.

\item General case.

Write $\dim X=m$ as in the theorem. Take an open dense subset of $X$ which is an $\cL_{\rm an, D}^F$-manifold, which is possible by \cite[Theorem 3.14]{DvdD} and Lemma \ref{lem:DDset}.  By Proposition \ref{prop:part} and Lemma \ref{lem:imp}, we may now suppose that $X$ equals the topological closure of the graph %$\Gamma$
of an $\cL_{\rm an, D}^F$-definable function $g:V\to \cO_F^{n-m}$ for some $\cL_{\rm an, D}^F$-definable open $V$ of $\cO_F^m$ and for some coordinates on affine space. Extend $g$  by $0$ to a function $\bar g$ on $\bar V$. Using Case 3 for $\bar g$ on $\bar V$, we can assume that $\bar V=\cO_F^m$ and that each component of $\bar g|_{(\cO_F\smallsetminus 0)^m}$ equals a monomial times an analytic unit. This means that $g$ can be extended continuously to a function $\widetilde g$ on $\cO_F^m$, and thus, $X$ equals the graph of  $\widetilde g$. This gives us an $\cL$-definable analytic isomorphism of $\cL$-manifolds $i:\cO_F^m \to X$. The assertion follows now from Case  \ref{c:ful.dim} applied to $\cO_F^m$ and $i^*(f)$.
% Now we finish by case 3 for the pullback of $f$ to $\cO_F^m$.    ??
%By dimension properties, $\dim (X\setminus \Gamma)< \dim X$ and, the projection of $X\setminus \Gamma$ is included in $\overline {U}$
\end{enumerate}
%We may suppose that $X\subset \cO_F^n$. In fact, we may suppose that $n=m$ (explain).
%By Theorem 5.5 of \cite{vdDHM} and its proof, there is an integer $M>0$ and there are finitely many definable subsets $X_s$ of $X$ and finitely many definable functions $f_{i\ell s}$ on $X_s$ such that for each $s$ and each $i$ %, such that the restriction of $f_i^{M}$ to $X_s$ is given by a $\widetilde \cLan$-term.
%$$
%f_i(x) =  \sum_{\ell} f_{i\ell s} (x) \mbox{ for each } x\in X_s,
%$$
%and such that  the functions
%$$
%f_{i\ell s}^M
%$$
%equal $\widetilde \cLan$-terms $t_{i\ell s}$. Hence, we may suppose that $f$ itself is an $\widetilde \cLan$-term. (explain; \blue{However, can we keep the support of the sector functions disjoint in doing this simplification to consider $f$ as a $\widetilde \cLan$-term ?? } ).
%%a solution may be to apply a r-power map on a sector. This is strict C^1 (not if only on a sector !!!! ) and it renders f an $\widetilde \cLan$-term. Then continue and apply    Theorem \ref{prop:unif}.
%Now use the Uniformization Theorem (2.4) from \cite{DvdD} in the form of Theorem \ref{prop:unif}.
%%This renders the domain and the pull-backs of the functions semi-analytic (instead of subanalytic, namely, no field inversion is involved anymore in the terms).
%Subsequently use the variant of Hironaka's result in the form of Theorem (2.2) of \cite{DvdD}.  (explain).
\end{proof}

It may be interesting to look for a definable, strict $C^1$ variant of Theorem \ref{thm:res:def} that can be shown without using analyticity and without using Hironaka's resolution results, see Remark \ref{rem:hensel} about a possible axiomatic approach in which piecewise analyticity may not hold.

\section{Proof of holonomicity}\label{sec:holonom:an}

\subsection{Proof of the analytic case}
In this section we prove Theorem \ref{thm:main} for $\cL=\cLan$.

Consider for any $n>0$ the Haar measure $|dx|$ on $F^n$ normalized so that $\cO_F^n$ has measure $1$.
By a smooth measure on $X$ we mean a distribution on $X$ which is locally (at any point $x\in X$) either zero or given by integration against the measure associated to an analytic volume form on $X$. Note that the wave front set in a way describes the non-smooth aspect of a distribution, and, in particular, the wave front set of a smooth measure is empty.
We will deduce Theorem \ref{thm:main} from the following theorem from \cite{CHLR} and our Key Lemma \ref{lem:key} by using regularization and induction on the dimension of $X$.

\begin{thm}[\cite{CHLR}]\label{thm:gen.sm}
Let $X$ be an $\cL$-submanifold of $F^n$ of dimension $m$ and let $\xi$ be a distribution on $X$.
If $\xi$ is of class $\cCexpL$ then $\xi$ is smooth when restricted to $X \smallsetminus C$, where $C\subset F^n$ is an  $\cL$-definable set of dimension less than $m$.
\end{thm}
\begin{proof}[Proof of Theorem \ref{thm:gen.sm}]
If $\xi$ is of class $\cCexp$, then this is a special case of Theorem 4.1.2 of \cite{CHLR}. For the $\cCexp_{{\rm an}}$-class, one moreover uses \cite[Remark 4.3.3]{CHLR}. % and %the proof of   Theorem 4.1.2 of \cite{CHLR}.
\end{proof}

\begin{lemma}[Key Lemma]\label{lem:key}
Suppose that $\cL=\cLan$.
Let $U$ be an $\cL$-definable dense open subset of a compact $\cL$-manifold $X$. Let $\mu$ be a smooth measure of class %$\cCexp$ (resp.~
$\cCexpL$ on $U$.
Then there is an $\cL$-definable dense open $V\subset U$ such that $\mu|_{V}$ can be extended to an $\cL$-WF-holonomic distribution on $X$ of class $\cCexpL$.
\end{lemma}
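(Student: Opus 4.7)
The plan is to reduce, via a chart decomposition and the resolution theorem, to a model form of the measure on $(\O_F\smallsetminus\{0\})^m$, and then build the extension by an induction on $m$ that runs jointly with the Holonomicity Theorem \ref{thm:main} in smaller dimension.

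\smallskip
\noindent\emph{Chart reduction and resolution.} By partition of unity (Proposition \ref{prop:part}) together with Lemma \ref{lem:imp}, the statement is local on $X$ and we may assume $X=\O_F^m$. Write $\mu=g\cdot|dx|$ on $U$ with $g\in\cCexpL(U)$, and expand $g$ as a $\CC$-linear combination of products of the building blocks $|f_k|$, $\ord h_l$, $\psi(p_j)$ from Definition \ref{defn:cexp}. Splitting $U$ into open $\cL$-definable pieces according to the inequalities $|f_k|\leq 1$, $|h_l|\leq 1$, $|p_j|\leq 1$ and replacing the functions by their reciprocals on the complementary pieces (legitimate since we only need the extension on some dense open $V\subset U$), we may assume all the $f_k, h_l, p_j$ are $\O_F$-valued. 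Apply Theorem \ref{thm:res:def} to the concatenated map: we obtain proper $\cLan$-definable analytic $\phi_i:\O_F^m\to X$ such that $U_i:=\phi_i((\O_F\smallsetminus\{0\})^m)$ are pairwise disjoint with union dense open in $X$; $\phi_i|_{(\O_F\smallsetminus\{0\})^m}$ is a local analytic isomorphism with finite fibres of constant size $d_i$; and every $f_k\circ\phi_i$, $h_l\circ\phi_i$, $p_j\circ\phi_i$ is an analytic unit times a monomial.

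\smallskip
\noindent\emph{Push-forward reduction to the model case.} It suffices to build, for each $i$, an $\cL$-WF-holonomic $\cCexpL$-class extension $\tilde\mu_i$ of $\phi_i^*(\mu|_{U_i})$ to $\O_F^m$; the desired distribution on $X$ is then $\sum_i d_i^{-1}(\phi_i)_*\tilde\mu_i$. Push-forward by a proper $\cLan$-definable analytic map preserves the $\cCexpL$-class by the stability results of \cite{CHLR}, and transforms any conormal bundle $CN_Y^{\O_F^m}$ into a finite union of conormal bundles of $\cL$-submanifolds of $X$ (after $\cL$-stratifying the image via \cite[Theorem 3.14]{DvdD}); the factor $d_i^{-1}$ compensates for the constant fibre cardinality and ensures agreement with $\mu$ on the common dense open. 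The problem becomes the \emph{model case}: on $X=\O_F^m$ with $U=(\O_F\smallsetminus\{0\})^m$, extend a measure $g\cdot|dx|$ in which $g$ is a finite sum of products of terms of the shape $c\cdot|x^\alpha|\cdot(\ord x^\beta)^k\cdot\psi\bigl(\lambda\,u(x)\,x^\gamma\bigr)$ with $\alpha\in\NN^m$, $\beta,\gamma\in\ZZ^m$, $k\in\NN$, $\lambda\in F$, and $u$ an analytic unit on $\O_F^m$. Since $|u|$ is locally constant and $\O_F^m$ is compact, we may further assume each such $|u|$ is constant.

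\smallskip
\noindent\emph{Model case by induction on $m$, and the main obstacle.} We induct on $m$, \emph{jointly} with the dimension-induction of Theorem \ref{thm:main} in strictly smaller dimension, as emphasised in the sketch (the Key Lemma is too weak to induct on by itself, but paired with Theorem \ref{thm:main} below the induction closes). The case $m=0$ is trivial. \textbf{The main difficulty} for $m\geq 1$ — precisely why resolution alone is insufficient — is the factor $\psi(\lambda\,u(x)\,x^\gamma)$: when $u$ genuinely depends on several coordinates, or $\gamma$ has mixed signs, this does \emph{not} split as a tensor product of one-variable oscillatory factors, and no direct outer-product construction is available. We circumvent this by integrating one coordinate at a time. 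Fixing $x_m$, for each $x'\in(\O_F\smallsetminus\{0\})^{m-1}$ the $x_m$-integrand is a one-variable oscillatory monomial whose canonical WF-holonomic $\cCexpL$-class extension to $\O_F$ is classical (analytic continuation of the Igusa local zeta function), with wave front in the conormal bundle of $\{x_m=0\}$. The resulting family in $x'$ is a distribution on $(\O_F\smallsetminus\{0\})^{m-1}$ of $\cCexpL$-class by the integration stability of this class (\cite[\S 8.6]{CLexp} and Proposition \ref{prop:Julia}); the inductive hypothesis, namely Theorem \ref{thm:main} in dimension $m-1$, together with Theorem \ref{thm:reg:F:one}, then extends it to an $\cL$-WF-holonomic $\cCexpL$-class distribution on $\O_F^{m-1}$. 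Recombining with the $x_m$-extension produces the required $\tilde\mu_i$ on $\O_F^m$, whose wave front set is contained in the finite union of conormal bundles of the coordinate hyperplanes and of the $\cL$-submanifolds supplied by the $(m-1)$-dimensional step — all $\cL$-submanifolds of $X$ — completing the proof.
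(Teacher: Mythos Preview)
Your chart reduction, use of resolution, and push-forward step are essentially what the paper does (the paper packages them via the ``good triple'' formalism of Proposition~\ref{prop:triples}, but the content is the same). The gap is in the model case.

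First, you do not invoke part~\eqref{thm:res:def:5} of Theorem~\ref{thm:res:def}: after resolution, the argument of $\psi$ is $u(x)M(x)^{\pm 1}$ with $u$ an analytic unit that may genuinely depend on \emph{all} coordinates. Your reduction ``$|u|$ constant'' does not make $u$ constant, so for fixed $x'$ the function $x_m\mapsto \psi\bigl(\lambda\,u(x',x_m)\,(x')^{\gamma'} x_m^{\gamma_m}\bigr)$ is \emph{not} a one-variable oscillatory monomial, contrary to what you assert. The paper eliminates this by using part~\eqref{thm:res:def:5} to assume either $M=1$ (then $\psi(u)$ is globally smooth and factors out) or $u=1$.

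Second, even granting $u=1$, your ``integrate one coordinate at a time and recombine'' step is not a construction. Extending the $x_m$-slice to a distribution $\nu_{x'}$ on $\cO_F$ gives a family depending nontrivially on $x'$ through $\psi\bigl((x')^{\gamma'}x_m^{\gamma_m}\bigr)$; this is \emph{not} a tensor product, and there is no evident way to ``recombine'' $\nu_{x'}$ with an $(m-1)$-dimensional object into a distribution on $\cO_F^m$ whose wave front you control. Regularizing the family from $(\cO_F\smallsetminus\{0\})^{m-1}\times\cO_F$ to $\cO_F^m$ via Theorem~\ref{thm:reg:F:one} would give \emph{some} $\cCexpL$-class extension, but you have given no argument for its WF-holonomicity, and the claimed containment of the wave front in conormals of coordinate hyperplanes is unjustified.

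The paper handles the hard case ($u=1$, $M$ nonconstant, negative exponent) differently: it first writes down an explicit regularization $\xi$ on $\cO_F^m$ (via a projection-and-lift operator $\varphi\mapsto\widetilde\varphi$), and then checks WF-holonomicity \emph{locally} at each boundary point $a\neq 0$. Near such $a$ with, say, $a_1\neq 0$ and $M(x)=x_1^{k_1}\cdots x_m^{k_m}$ with $k_1,k_2\geq 1$, the local analytic change of coordinates
\[
\phi:(x_1,\dots,x_m)\longmapsto (a_1x_1^{k_2},\,x_2x_1^{-k_1},\,x_3,\dots,x_m)
\]
pulls $M$ back to a monomial independent of $x_1$; only then does the situation split as a product (one smooth one-dimensional factor times an $(m-1)$-dimensional factor), and induction via Proposition~\ref{prop:triples}\eqref{trip3} and Lemma~\ref{lem:allgood} applies. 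This local change of variables is the missing idea.
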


Based on the key lemma for $\dim X \leq n$ and assuming Theorem \ref{thm:main} for $\dim X < n$ and with $\cL=\cLan$ we can now prove Theorem \ref{thm:main} for $X$ with  $\dim X=n$ and $\cL=\cLan$. %, using Theorem \ref{thm:gen.sm}, we can now prove .
In section \ref{sec:key}, we will prove the Key Lemma for $\dim X=n$ assuming Theorem \ref{thm:main} for $X$ with $\dim X < n$. This will complete the proof of  Theorem \ref{thm:main}
for $\cL=\cLan$.
In \S\S \ref{ssec:Hol.alg}
we will deduce  Theorem \ref{thm:main} in the general case.

\begin{proof}[Proof of Theorem \ref{thm:main} assuming $\cL=\cLan$]
Denote by $\dims(\xi)$ the minimal dimension of an $\cL$-definable set $Y\subset X$ such that $\Supp(\xi)\subset Y$.
The proof is by induction on %$\dim(X)$ and then on
$\dims(\xi)$. The base case is trivial. %, so we assume that we know the theorems for distributions on any manifold of dimension less then $\dim(X)$ and for distributions on $X$  whos  supportis contained in a definebl set of dimension less  then $dims(\xi))$.

By Lemma \ref{lem:imp} we find a finite, open  $\cL$-definable cover $X=\bigcup_i U_i$  together with open $\cL$-definable embeddings $U_i \to \O_F^n$ of $\cL$-manifolds. (Recall that $\dim X= n$.) Using partition of unity (Proposition \ref{prop:part}) we can reduce to the case that $X$ is an open $\cL$-definable set in  $\O_F^n$. Using  the regularisation result (Theorem \ref{thm:reg:F:one})  we can reduce to the case $X=\O_F^n$. We proceed by analyzing the following cases.

\begin{enumerate}[{Case} 1:]
\item  $\dims(\xi)=\dim(X)$\\
By Theorem \ref{thm:gen.sm} there is a definable open dense subset $V\subset X$ such that the restriction $\xi|_V$ is smooth.
By our Key Lemma \ref{lem:key}, and up to making $V$ smaller if necessary, we can extend $\xi|_V$ to an $\cL$-WF-holonomic distribution $\xi'$ on $X$ of class
 $\cCexpL$. Decompose $\xi$ as the sum $\xi=\xi'+(\xi-\xi')$. By the induction assumption (on $\dims$), $\xi-\xi'$ is $\cL$-WF-holonomic (indeed, its support lies in $X\setminus V$ which is of dimension less than $\dims(\xi)$ by properties of dimensions of definable sets). Since the sum of $\cL$-WF-holonomic distributions is $\cL$-WF-holonomic, the theorem follows.
\item  $\dims(\xi)<\dim(X)$\\
Let $Y\subset X$ be a closed $\cL$-definable  set such that $\Supp(\xi)\subset Y$ and $\dim(Y)=\dims(\xi)$.
%By quantifier elimination we can find an $\cL$-definable $Y'\supset Y$ such that $Y'$ is the zero locus of an $\cL_{eq}$-term $f:\cO_F^N\to \cO_F^k$.

We apply resolution of singularities (Theorem \ref{thm:res:def}) for the definable set $Y$ and the constant function $1$ on it. Let $U$, $\phi_i$, $U_i$ and $d_i$ be as in Theorem \ref{thm:res:def}. Let $V=\bigcup U_i$.
Put $$\xi':=\xi|_{X \smallsetminus (Y  \smallsetminus  V)}.$$
The distribution $\xi'$ is supported on $V$ and thus can be thought of as a distribution on $V$. It is easy to see that as such it is also  of $\cCexpL$-class.
Put
$\xi_i=\xi'|_{U_i}.$ Using regularisation (Theorem \ref{thm:reg:F:one}) we can extend $\phi_i^{*}(\xi_i)$  to a $\cCexpL$-class distribution $\xi'_i$ on $\cO_F^{\dim Y}$. Each $\xi'_i$  is $\cL$-WF-holonomic by the inductive hypothesis (indeed, $\dim Y<n$). Let $$\xi'':=\sum (\phi_i)_*(\xi'_i)/d_i.$$ Note that $\xi''|_V=\xi|_V$.

%Applying resolution of singularities for $\cL$-definable sets (see Theorem \ref{thm:res:def}) we can find proper maps $\phi_i:Y_i\to Y$ such that  $Y_i$ are  $\cL$-manifold and there exist
%open dense $\cL$-definable subsets $U\subset Y$    $\phi_i|_{\phi^{-1}(U)}:\phi^{-1}(U)\to \phi^{-1}(U)$ is a local isomorphism with fibers of constant finite cardinality.
%....\\

%Let $\xi':=\xi|_{X \smallsetminus (Y'  \smallsetminus  U)}$.

%the distribution $\xi'$ is supported on $U$ and thus can be thought of as a distribution on $U$. It is easy to see that as such it is also  of $\cCexpL$-class.
% Using regularisation (Theorem \ref{thm:reg:F:one}) we can extend $\phi|_{\phi^{-1}(U)}^*(\xi')$  to a $\cCexpL$-class distribution $\xi''$ on $\tilde Y$. By the induction on $\dim X$, $\xi''$ is $\cL$-WF-holonomic.

%Let $\xi''':=\phi_*(\xi'')$.
By \cite[Proposition 3.2.7 (2)]{AizDr} (see also  \cite[Theorem 2.9]{Hei85a} and \cite[Theorem 2.9.6]{CHLR}) on push-forwards, $\xi''$ is also $\cL$-WF-holonomic. Furthermore, $\xi''$ is of $\cCexpL$-class by the stability under push-forward from \cite[Theorem 3.4.5 and Remark 4.3.3]{CHLR}. Decompose $$\xi=\xi''+(\xi-\xi'').$$ Again we are done by induction on $\dims(\xi)$. Indeed, $(\xi-\xi'')$ has a lower dimensional support than $\xi$.
\end{enumerate}
% When $\xi$ is of $\cCexp$-class, one uses Proposition 4.3.1 of \cite{CHLR} together with the correction from Remark \ref{rem:amend:4.3.1} below to derive the result from the case for the $\cCexp_{\rm an}$-class.
\end{proof}

%The remainder of Section \ref{sec:fixed:F} serves mainly for the proof of the Key Lemma, via a new resolution result for definable functions. % and our regularization results.

\subsubsection{Proof of the Key lemma}\label{sec:key}

Define a regular triple to be a triple $(\mu,U,X)$ that satisfies the assumptions of the Key lemma, namely, $\mu$ is a $\cCexpL$-class smooth measure on an $\cL$-definable dense open $U$ of the compact $\cL$-manifold $X$ and where $\cL=\cLan$.
Call such a regular triple good if the Key lemma holds for it. Precisely, a regular triple $(\mu,U,X)$ is called good if there is an $\cL$-definable dense open $V$ of $U$  such that  $\mu|_V$ has an extension to an $\cL$-WF-holonomic distribution on $X$ which is of $\cCexpL$-class.

The Key Lemma will follow from the Resolution Theorem for definable functions,  %cell decomposition and
the following straightforward proposition and lemma, and an inductive procedure in tandem with the proof of our main holonomicity result.

\begin{proposition}\label{prop:triples}
Suppose that $\cL=\cLan$.
Consider a regular triple  $(\mu,U,X)$. Then the following properties hold.
\begin{enumerate}
 \item\label{trip2} If  $(\mu_1,U,X)$ and $(\mu_2,U,X)$  are good then so is $(\mu_1+\mu_2,U,X)$.
\item\label{trip3} If  $(\mu_1,U_1,X_1)$  $(\mu_2,U_2,X_2)$  are good then so is $(\mu_1\boxtimes\mu_2,U_1\times U_2 ,X_1\times X_2)$, with $\mu_1\boxtimes\mu_2$ the product measure.
\item\label{trip4} If  $(\mu,U,X)$ is good and $f$ is a smooth (namely, locally constant) $\cCexpL$-function on $X$ then $(\mu f|_U,U,X)$ is good.
\item\label{trip7}
Assume that  $(\mu_1,U_1,X_1)$ and $(\mu_2,U_2,X_2)$ are regular triples and that $\varphi:X_1\to X_2$ is a
proper, $\cL$-analytic map such that
 \begin{itemize}
\item $\varphi(U_1)$ is open in $U_2$.  %and disjoint from $\varphi(X_1\smallsetminus U_1)$.
\item $\varphi|_{U_1}$ is a local isomorphism onto $\varphi(U_1)$.   %an open map.
\item $(\varphi|_{U_1})_{*}(\mu_1)=\mu_2$, that is, the push forward along $\varphi|_{U_1}$ of the distribution $\mu_1$ on $U_1$ equals  $\mu_2$ (as distributions on $U_2$).
\end{itemize}
Then, if  $(\mu_1,U_1,X_1)$ is good, then so is $(\mu_2,U_2,X_2)$.
\end{enumerate}
\end{proposition}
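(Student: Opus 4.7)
The plan is a case-by-case verification: in each item we exhibit the natural candidate extension and check that it remains both of $\cCexpL$-class and $\cL$-WF-holonomic. The $\cCexpL$-class part is essentially immediate in all four parts, since $\cCexpL$ is a $\CC$-algebra stable under the operations appearing (sum, multiplication by a smooth $\cCexpL$-function, external product, and push-forward along a proper $\cL$-analytic map, see \cite[Section 3, Remark 4.3.3]{CHLR}); so the essential content is to keep the wave front inside a finite union of co-normal bundles of $\cL$-submanifolds.

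For part (\ref{trip2}), let $\xi_i$ witness the goodness of $(\mu_i,U,X)$, defined on a dense open $V_i\subset U$. Set $V := V_1\cap V_2$, which is again $\cL$-definable dense open in $U$; the distribution $\xi_1+\xi_2$ extends $(\mu_1+\mu_2)|_V$ and has $\WF(\xi_1+\xi_2) \subset \WF(\xi_1)\cup\WF(\xi_2)$, a finite union of co-normal bundles. For part (\ref{trip4}), take the same $V$ as for $(\mu,U,X)$ and the extension $f\cdot\xi$; since $f$ is smooth (locally constant), $\WF(f\cdot\xi)\subset\WF(\xi)$. For part (\ref{trip3}), choose $V := V_1\times V_2$ (dense open in $U_1\times U_2$) and the external product $\xi_1\boxtimes\xi_2$; the classical product-of-distributions wave front estimate, combined with the identifications $T^*(X_1\times X_2)=T^*X_1\times T^*X_2$ and $CN^{X_1\times X_2}_{Y_1\times Y_2}=CN^{X_1}_{Y_1}\times CN^{X_2}_{Y_2}$, yields that $\WF(\xi_1\boxtimes\xi_2)$ is contained in the finite union of the $CN^{X_1\times X_2}_{Y_{1,j}\times Y_{2,k}}$ together with the ``mixed'' pieces $CN^{X_1\times X_2}_{Y_{1,j}\times X_2}$ and $CN^{X_1\times X_2}_{X_1\times Y_{2,k}}$, which are again conormals to $\cL$-submanifolds.

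Part (\ref{trip7}) is the most delicate, and is where I expect the main obstacle. Let $\xi_1$ be the extension of $\mu_1|_{V_1}$ witnessing the goodness of $(\mu_1,U_1,X_1)$. Since $\varphi$ is proper by hypothesis, $\varphi_*\xi_1$ is a well-defined distribution on $X_2$, of $\cCexpL$-class by stability of the class under push-forward \cite[Theorem 3.4.5 and Remark 4.3.3]{CHLR}, and WF-holonomic by the push-forward bound \cite[Proposition 3.2.7(2)]{AizDr}: each co-normal bundle $CN^{X_1}_Y$ is sent into a finite union of co-normal bundles of the strata of $\varphi(Y)$, which is an $\cL$-definable closed subset of $X_2$ admitting an $\cL$-definable stratification into $\cL$-submanifolds (by the results quoted in Section~\ref{sec:resolution}). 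To realize $\varphi_*\xi_1$ as an extension of a restriction of $\mu_2$, set $V_2 := U_2\setminus\varphi(X_1\setminus V_1)$; since $X_1\setminus V_1$ is closed in the compact $X_1$, its image under $\varphi$ is closed in $X_2$ of dimension strictly less than $\dim X_2$, so $V_2$ is $\cL$-definable dense open in $U_2$. The hypothesis $(\varphi|_{U_1})_*(\mu_1)=\mu_2$ forces $\mu_2$ to be supported in $\varphi(U_1)$, and a direct check using the local isomorphism property of $\varphi$ on $U_1$ shows that $\varphi_*\xi_1|_{V_2} = \mu_2|_{V_2}$, as required.
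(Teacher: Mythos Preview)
Your proof is correct and follows essentially the same approach as the paper, which dismisses (\ref{trip2})--(\ref{trip4}) as routine and concentrates on (\ref{trip7}) via the push-forward $\varphi_*\xi_1$, invoking the same references \cite[Proposition 3.2.7(2)]{AizDr} and \cite[Theorem 3.4.5, Remark 4.3.3]{CHLR}. The only cosmetic difference is your choice of $V_2 = U_2\setminus\varphi(X_1\setminus V_1)$ versus the paper's $V_2 = \varphi(V_1)\cup\big(U_2\setminus\overline{\varphi(U_1)}\big)$; both work, and yours is arguably cleaner since it avoids the paper's side remark that $\varphi(U_1)$ is disjoint from $\varphi(X_1\setminus U_1)$.
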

\begin{proof}
Only (\ref{trip7}) needs a proof (the other properties follow more easily from the corresponding properties of wave front sets and analytic manifolds).

For Property (\ref{trip7}), take an $\cL$-definable dense open $V_1$ of $U_1$ and a good extension $\xi_1$ on $X_1$ of $\mu_1|_{V_1}$ (namely, an extension which is of $\cCexpL$-class and $\cL$-WF-holonomic). The push forward $\varphi_*(\xi_1)$ is of $\cCexpL$-class and $\cL$-WF-holonomic.  Indeed, the push-forward of an analytically WF-holonomic distribution under a proper analytic map is again analytically WF-holonomic by \cite[Proposition 3.2.7 (2)]{AizDr} (see also  \cite[Theorem 2.9]{Hei85a} and \cite[Theorem 2.9.6]{CHLR}), and, the push-forward of a $\cCexpL$-class distribution under a proper $\cL$-analytic map is again of $\cCexpL$-class (see \cite[Theorem 3.4.5 and Remark 4.3.3]{CHLR}). %We show that $\varphi_*(\xi_1)$ is as desired.
Let $V_2$ be the union of $\varphi(V_1)$ with  $U_2\smallsetminus \overline{\varphi(U_1)}$, where $\overline{\varphi(U_1)}$ is the closure of $\varphi(U_1)$. The set $\varphi(U_1)$ is automatically disjoint from $\varphi(X_1\smallsetminus U_1)$.  Hence, by construction and definability properties, $V_2$ is an $\cLan$-definable dense open in $U_2$, and, $\varphi_*(\xi_1)$ extends $\mu_2|_{V_2}$. Hence, we are done.
\end{proof}

\begin{lem}\label{trip1}
Consider an $\cL$-submanifold $X\subset F^n$. Let $U\subset X$ have finite complement in $X$, and let $\xi$ be a distribution of $\cCexpL$-class on $U$ such that $\xi$ is $\cL$-WF-holonomic. Then there is a distribution $\xi_X$ on $X$ which is of $\cCexpL$-class, whose restriction to $U$ equals $\xi$, and such that $\xi_X$ is $\cL$-WF-holonomic.
\end{lem}
\begin{proof}
Regularize $\xi$ to a distribution $\xi_X$ on $X$ using a section as given by Theorem \ref{thm:reg:F:one}. Then $\xi_X$ is as desired. Indeed, $\xi_X$ is $\cL$-WF-holonomic since $U$ has finite complement in $X$.
\end{proof}

We show the following lemma assuming Theorem \ref{thm:main} for $X$ of dimension less than $n$.
\begin{lem}\label{lem:allgood}
Suppose that $\cL=\cLan$.
Let a good triple $(\mu,U,X)$  be given with  $\dim X \leq n$. Let $\xi$ be any $\cCexpL$-class distribution on $X$ which coincides with $\mu$ on a dense open $V$ of $U$. Then $\xi$ is $\cL$-WF-holonomic.
\end{lem}
\begin{proof}
Let a distribution $\xi_X$ on $X$ be given by the goodness of the triple. Write  $\xi = \xi_X + (\xi - \xi_X)$.  Then, by the argument of case 2 of the proof of Theorem \ref{thm:main} and by our assumption that Theorem \ref{thm:main} holds when $\dim X <  n$, we find that $\xi - \xi_X$ and hence also $\xi$ are $\cL$-WF-holonomic.
\end{proof}

We can now prove our Key Lemma for $\dim X=n$ assuming  the main holonomicity theorem (Theorem \ref{thm:main}) for $X$ with $\dim X < n$ and $\cL=\cLan$.

\begin{proof}[Proof of the Key Lemma \ref{lem:key}]
Let a regular triple $(\mu,U,X)$  be given.  We proceed by induction on $\dim X$,  where the one-dimensional case is taken care of by Lemma \ref{trip1}.
By the definition of $\cCexpL(U)$, $\mu$ is a finite sum of terms of the form
$$
x\in U\mapsto c \psi(f_{1}(x))  |f_2(x)|   \prod_{i=3}^k \ord (f_i(x)) ,
$$
where the $f_i$ are $\cL$-definable $F$-valued functions and where $c\in \CC$, and where $f_i\not=0$ for $i>2$. By (\ref{trip2}) of Proposition \ref{prop:triples}, and up to replacing $U$ with a dense open, we may suppose that $\mu$ equals one such term. Indeed, any such term is locally constant on a dense definable open.
By working piecewise on $X$ we may suppose for each $i$ that either $|f_i|>1$, or, $|f_i|\leq 1$ holds. Apply Theorem \ref{thm:res:def} to the function whose $i$th component is $f_i$ if $|f_i|\leq 1$ on $X$ and $1/f_i$ otherwise. By (\ref{trip2}) and (\ref{trip7}) of Proposition  \ref{prop:triples}, this reduces the case to $U$ being $(\cO_F\smallsetminus\{0\})^m$, $X=\cO_F^m$, and each $f_i$ (or $1/f_i$) being of the form (\ref{fjvarphii}).
 %Again by (\ref{trip2}) and (\ref{trip6}) of Proposition \ref{prop:triples},
We may thus suppose that %there is a sector $A$ in $\cO_F^m$ such that
$\mu$ is of the form
$$
\mu(x) = c \psi\big(u(x)M(x)^{\eta_1}\big) \prod_{i=1}^m |x_i|^{\eta_2 s_i}  \ord(x_i)^{t_i}
$$
%for $x\in A$ and $\mu(x)=0$ for $x\not\in A$,
where $c\in \CC$, $s$ and $t$ lie in $\NN^m$,   $\eta_1$ and $\eta_2$ lie in $\{1,-1\}$, %, $t_i$ are nonnegative integers
%\in \NN^m$ and where $ (\val(x))^t = \prod_{i=1}^m\ord(x_i)^t_i$,
$u$ is an analytic unit on $\cO_F^m$ and $M$  is a monomial on $\cO_F^m$.
By  \eqref{thm:res:def:5} of Theorem \ref{thm:res:def}, we may furthermore suppose
that either $M(x)=1$ or $u(x)=1$ for all $x$ in $\cO_F^m$.

In the first case that  $M(x)=1$ on $\cO_F^m$ we are done by (\ref{trip3}) and (\ref{trip4}) of Proposition \ref{prop:triples}. Indeed, $\psi(u(x))$ is smooth and nonvanishing on $\cO_F^m$, and $\mu(x)/\psi(u(x))$ is a Cartesian product situation with one-dimensional Cartesian factors each of which falls under Lemma \ref{trip1}.
Similarly one treats the case that $\eta_1=1$.
Let us now treat the final case that $u(x)$ is constant and $M(x)$ is non-constant on $\cO_F^m$, and $\eta_1=-1$.
We first regularize $\mu$ to a $\cCexpL$-class distribution $\xi$ on $\cO_F^m$.
We will then show, by working locally, that $\xi$ is $\cL$-WF-holonomic. Recall that $U$ is $(\cO_F\smallsetminus\{0\})^m$ and that $X$ is $\cO_F^m$.

%For each $i=1,\ldots,m$ consider the subset $P_i$ of $\cO_F^m$ consisting of $x\in \cO_F^m$ satisfying $|x_i|< |x_j| < q_F^{-1}$ for all $j\not=i$. Note that the $P_i$ are disjoint and open.  For each $i$, let $P_i'$ be the image of $P_i$ under the map sending $x$ to $(x_1,\ldots,x_{i-1},0,x_{i+1},\ldots,x_m)$.

Let $p$ be the function $\cO_F^m\to \cO_F^m$ which sends $x$ to $(x_1,\ldots,x_{i-1},0,x_{i+1},\ldots,x_m)$ where $i$ is the minimal number in $\{1,\ldots,m\}$ such that $|x_i|$ is minimal among the $|x_j|$ for $j$ in $\{1,\ldots,m\}$.
Let $\varphi_0$ be a locally constant function on $U^c := \cO_F^m \smallsetminus U$ (with the subset topology) and with compact support. Associate to $\varphi_0$ a function $L(\varphi_0)$ on $\cO_F^m$ by sending $x\in \cO_F^m$ to $\varphi_0(p(x))$. Clearly $L(\varphi_0)$ is a Schwartz-Bruhat function on $\cO_F^m$.

For $\varphi$ a Schwartz-Bruhat function on $\cO_F^m$, let $\varphi^c$ be the restriction of $\varphi$ to $U^c$, and consider the lift $L(\varphi^c)$. Let $\widetilde \varphi$ be the restriction to $U$ of $\varphi - L(\varphi^c)$. Then by construction $\widetilde \varphi$ is a Schwartz-Bruhat function on $U$. Moreover, the function sending $\varphi$ to $\widetilde \varphi$ is a $\CC$-linear map from $\cS(\cO_F^m)$ to $\cS(U)$, which is the identity on $\cS(U)$. Now let $\xi$ be the distribution on $\cO_F^m$ sending a Schwartz-Bruhat function $\varphi$  on $\cO_F^m$ to the evaluation of the distribution $\mu$ at $\widetilde \varphi$. Clearly $\xi$ is of $\cCexpL$-class and extends $\mu$ (use Proposition \ref{prop:Julia}). There is only left to show that $\xi$ is $\cL$-WF-holonomic, which is a local property.

Fix a point $a$ in $\cO_F^m\smallsetminus U$. It is enough to show that the restriction of $\xi$ to a small neighborhood of $a$ is $\cL$-WF-holonomic. By a reasoning as for Lemma \ref{trip1}, it is sufficient to treat the case that $a\not=0$.
%We show the key lemma for the restriction of our triple to a small neighborhood of $x_0$. First suppose that $x_0\not=0$.
Up to reordering the variables, we may suppose that the first coordinate $a_{1}$ of $a$ is nonzero. If $M(x)$ does not depend on $x_1$, then we are done by (\ref{trip3}) of Proposition \ref{prop:triples} and induction on the dimension of $X$ and Lemma \ref{lem:allgood}. If $M(x)$ depends nontrivially on $x_1$ but on no other variable, we can finish similarly. Now suppose that $M(x)$ depends nontrivially on $x_1$ and, say, also nontrivially on $x_2$.
Write $M(x)=x_1^{k_1}\cdots x_m^{k_m}$. Consider the map $\phi:\O_F^m\to \O_F^m$
%then (if we take the neighborhood of $x_0$ small enough), we can perform the following
%local coordinate change:
$$(x_1,\dots,x_n)\mapsto (a_1 x_1^{k_2},x_2 x_1^{-k_1},x_3\dots, x_m).$$
There exists a  small ball $B$ around $(1,a_2,\dots,a_m)$ such that  $\phi|_B$ is a proper analytic isomorphism onto an open neighborhood of $a$.
So, it is enough to prove that $(\phi^{*}(\xi))|_B$ is $\cL$-WF-holonomic.

Note that $(B,\phi^{-1}(U)\cap B, \phi^{*}(\mu) )$ is a good triple by (\ref{trip3}) of Proposition \ref{prop:triples} and induction on the dimension of $X$ (indeed, the pull-back of $M$ along $\phi$ does not involve $x_1$ anymore and hence one can apply (\ref{trip3}) of \ref{prop:triples}). By Lemma \ref{lem:allgood}  we are done for $(\phi^{*}(\xi))|_B$.
 %$(\phi^{*}(\xi))|_B$ does not ???
% around $a$ in $\cO_F^m$ so that there is no
% depend on $x_1$  anymore in $M(x)$ and again we can finish by (\ref{trip3}) of Proposition \ref{prop:triples} and induction on the dimension of $X$. (explain the coordinate changes more in this proof?)
%Finally, if $x_0=0$ in $\cO_F^m$, then we are done by (\ref{trip1}) of Proposition \ref{prop:triples}.
\end{proof}

\subsection{Holonomicity: the algebraic case}\label{ssec:Hol.alg}

In this section we prove Theorem \ref{thm:main} for $\cL=\cL^F$.
We will first treat the case that the Zariski closure of $X$ in $\AA_F^n$ is smooth using Theorem \ref{thm:main} with $\cL=\cLan$ as proved in Section \ref{sec:holonom:an} and by Proposition 4.3.1 from \cite{CHLR}. The general case will follow from this smooth case by our partition of unity result and by reducing to graphs.

The following remark amends Proposition 4.3.1 of \cite{CHLR} by making explicit the smoothness condition.
 \begin{remark}\label{rem:amend:4.3.1}
In \cite{CHLR}, the notion of algebraic WF-holonomicity is only defined for distributions on analytic submanifolds $X\subset F^n$ such that the Zariski closure of $X$ in $\AA^n_F$ is smooth. Therefore, the condition that the Zariski closure of $W_{F,y}$ in $\AA_F^n$ is smooth should be added as an extra assumption at the start of the `Moreover' statement of Proposition 4.3.1, for each $y$ and $F$.
 %See Proposition \ref{} below for a generalization of Proposition 4.3.1 of \cite{CHLR} to the more general setting of this paper.
%In Proposition 4.3.1 of \cite{CHLR}, it is overlooked that the Zariski closure of a $\cL^F$-submanifold of $F^n$ may be singular. Therefor, one should use our more general notion of algebraic WF-holonomic than in \cite{CHLR}, and, instead of smooth locally closed affine varieties one should take $\cL^F$-manifolds. See below for a complete corrected (uniform) statement of Proposition 4.3.1 of \cite{CHLR}.
%one should first perform a change of variables of the manifolds $W_{F,y}$ using Lemma \ref{lem:cov-4.3.1}, in order to guarantee the existence of smooth $X_{0,F,y}$. Indeed, for a general $\cL^F$-definable strict $C^1$ manifold $X$, there may not be a locally closed smooth algebraic variety $\cX$ with $X\subset \cX(F)$ and with $\dim \cX =\dim X$. (The smoothness may fail.)
\end{remark}

\begin{prop}\label{thm:hol:alg}
Let $X\subset F^n$ be an $\cL^F$-manifold such that the Zariski closure of $X$ in $\AA_F^n$ is smooth.
Let $\xi$ be a distribution on $X$ of $\cCexp_{\cL^F}$-class. Suppose that $\xi$ is strict $C^1$ WF-holonomic. Then $\xi$ is $\cL^F$-WF-holonomic and thus algebraically WF-holonomic  (see Definitions \ref{def:WF-hol}, \ref{defn:L-WF}). % which coincides with the corrresponding definitions of \cite{AizDr} and \cite{CHLR} in this smooth case).
\end{prop}
\begin{proof}
The result is a special form of %from Theorem \ref{thm:main} with $\cL=\cLan$ as proved in Section \ref{sec:holonom:an}  and
Proposition 4.3.1 of \cite{CHLR} and  Remark \ref{rem:amend:4.3.1}. % in the case that $X$ has singular Zariski closure.
\end{proof}

\begin{proof}[Proof of Theorem \ref{thm:main} for $\cL=\cL^F$]
We may suppose that $X\subset F^n$.  If the Zariski closure of $X$ in $\AA_F^n$ is smooth, then we are done by Proposition \ref{thm:hol:alg} and by the above proved case of Theorem \ref{thm:main}  for $\cLan$ (which contains $\cL^F$). Indeed, $\cLan$-WF-holonomicity implies strict $C^1$ WF-holonomicity. Now let $X\subset F^n$ be a general $\cL^F$-manifold. By Lemma \ref{lem:imp} and Proposition \ref{prop:part}, we may suppose that $X$ is of dimension $n$. Indeed, the pieces given by Lemma \ref{lem:imp} can be taken clopen and disjoint by Proposition \ref{prop:part}. Hence, we are done by the previous case since the Zariski closure of $X$ now equals $\AA^n_F$ which is smooth. This finishes the proof of Theorem \ref{thm:main}.
\end{proof}

\section{Smooth loci and zero loci}\label{sec:loci}

A zero locus of a $\cCexpL$-class function equals the zero locus of a bounded function of the same class, as follows.

\begin{prop}\label{prop:int-loc}
Let $g$ be in $\cCexpL(X)$ for some $\cL$-definable set $X$. Then there is a function $h$ in $\cCexpL(X)$ such that $|h(x)|_\CC$ is bounded on $X$, and such that the zero locus of $h$ equals the zero locus of $g$.
\end{prop}
\begin{proof}
We will construct $h$ by multiplying $g$ with a function $f$ in $\cCexpL(X)$ such that $f$ takes positive real values at most $1$. Such a product clearly preserves the zero locus. Write $g$ as a finite sum of products of generators $T_i$ of the forms (\ref{gen:1}), (\ref{gen:2}), and (\ref{gen:3}) of Definition \ref{defn:cexp}. In each generator $T_i$ there occurs an $F$-valued definable function, say, $t_i$. For each $x\in X$, let $\alpha(x)$ be the maximum of $0$ and the sum over $i$ of  the values $\ord t_i(x)$ where $i$ is such that $0<|t_i(x)|\leq 1$. Now let $f(x)$ be $q_F^{-\alpha(x)}$ for $x\in X$.  %Finally put $h(x)= f(x)g(x)$.
Then clearly $h= f(x)g(x)$ is as required.
\end{proof}

The following result about local constancy is more simple than Theorem \ref{thm:gen.sm}.
\begin{prop}[\cite{CHallp}]\label{prop:loc:const}
Let $X$ be an $\cL$-definable set and let $g$ be in $\cCexpL(X)$. Then there is a finite partition of $X$  into $\cL$-manifolds $D_i$ such that the restriction of $g$ to $D_i$ is locally constant for each $i$.
\end{prop}
\begin{proof}
Up to partitioning $X$ into finitely many $\cL$-manifolds by \cite[Proposition 1.5.3]{CCL} and restricting $g$ to the pieces, we may suppose that $X$ is an $\cL$-manifold. By Theorem \ref{thm:gen.sm} it follows that $g$ is locally constant on the complement of an $\cL$-definable set $D \subset X$ of dimension less than $\dim X$. (Alternatively this follows by the more basic theorem 4.4.3 of \cite{CHallp} and its proof.) Up to making $D$ larger if necessary, we may suppose that $X\smallsetminus D$ is an $\cL$-manifold, by the stratification result \cite[Proposition 1.5.3]{CCL}. %By Lemma \ref{lem:imp}, we may suppose that each part is a graph of the form of that lemma.
Now we can finish by induction on $\dim X$ by working with the restriction of $g$ to $D$.
\end{proof}

Propositions \ref{prop:int-loc} and \ref{prop:loc:const} allow us to prove our result on loci (Theorem \ref{prop:conv}).
\begin{proof}[Proof of Theorem \ref{prop:conv}]
Let $X$ and $g$ satisfy the assumptions of the theorem, with $X\subset F^n$. By Proposition \ref{prop:int-loc}, and up to replacing $g$ without changing its zero locus, we may suppose that the complex norm $|g(x)|_\CC$  is bounded on $X$. Apply Proposition \ref{prop:loc:const} to $g$ to find $\cL$-manifolds $D_i$. For each $i$, let $\mu_{D_i}$ be the canonical measure on $D_i$ coming from the submanifold structure $D_i\subset F^n$, see Section 2.3 of \cite{CHLR}.  Note that $\mu_{D_i}$ gives a distribution on $X$.
Now $\xi := \sum_i g\mu_{D_i}$ is as desired.
\end{proof}

\section{Uniformity in the local field and in definable families}\label{sec:unif}

In this section, $F$ is no longer fixed and is no longer assumed to be of characteristic zero. On the contrary, we focus on uniformity over all local fields with as only restriction that, if $F$ has positive characteristic, then $F$ is assumed to have characteristic at least $M$ for some $M$ which may become bigger when needed. Until the end of the paper, we use terminology and notation from Section 3.1 of \cite{CHLR},  without recalling that section in full. In particular this fixes uniform notions of
\begin{itemize}
\item functions of $\cCexp$-class,
\item definable sets, and,
\item definable functions,
\end{itemize}
 where uniformity is in all local fields $F$ (with structure from the generalized Denef-Pas language) of characteristic zero and of positive characteristic at least $M$ for some $M$, denoted together by $\Loc_M$. Furthermore,  $\Loc_M'$ denotes the collection of pairs $(F,\psi)$ of  $F$ in $\Loc_M$ and $\psi$ an additive character on $F$ which is trivial on $\cM_F$ and nontrivial on $\cO_F$. A definable set $X$ is now a collection $(X_F)_{F\in \Loc_M}$ for some $M$, and, $\VF$ stands for the definable set $(F)_{F\in \Loc_M}$.

We use the following notion of $\cCexp$-families of distributions.
\begin{defn}[$\cCexp$-families of distributions] Let $Y$ and $X\subset Y\times \VF^n$ be definable sets. Suppose that for each $F\in\Loc'_M$ for some $M>0$ and each $y\in Y_F$, the set $X_{F,y}\subset F^n$ is an analytic manifold, where $X_{F,y} = \{x\in F^n\mid (y,x)\in X\}$. Let for each $F\in \Loc'_M$ and each $y\in Y_F$ a distribution $\xi_{F,y}$ be given on $X_{F,y}$. Then we call the collection of distributions $\xi_{F,y}$ for $F\in \Loc'_M$ and $y\in Y_F$ a $\cCexp$-family of distributions on $X$ over $Y$ if the collection of $B$-functions $D_{\xi_{F,y}}$ is of $\cCexp$-class (namely, there is a $\cCexp$-class function $B:X \times \ZZ\to \CC$ such that $D_{\xi_{F,y}} ( x , r )  = B_{F} (y,x,r)$ for each $F\in \Loc'_{M'}$ for some $M'\geq M$ and each $(y,x,r)\in X_F\times \ZZ$).

Denote by $\cS^*_{\cCexp / Y}(X)$ the space of $\cCexp$-families of distributions on $X$ over $Y$.

Denote by $\cS_{\cCexp / Y}(X)$ the space of $\cCexp$-functions $\varphi$ on $X$ such that $\varphi_{F}(y,\cdot):X_{F,y}\to\CC$ is a Schwartz-Bruhat function on $X_{F,y}$ for each $F \in \Loc'_{M}$ for some $M$ and each $y\in Y_F$.
\end{defn}

The following result generalizes Theorem \ref{thm:main} to uniformity in the local field $F$, and, in definable families with parameter $y$ in a definable set $Y$.  It gives two things: the uniform description of witnesses of the WF-holonomicity (the $W_i$), and, algebraic holonomicity also for local fields $F$ of positive characteristic larger than some $M$.

\begin{theorem}[Uniform holonomicity]\label{thm:main:fam} Let $Y$ and $X\subset Y\times \VF^n$ be definable sets. Suppose that $X_{F,y}$  is an analytic manifold for each $F\in \Loc'_M$ and each $y\in Y_F$.
Let $\xi$ be in $\cS^*_{\cCexp / Y}(X)$.  %Let for each $F\in \Loc'_M$ and each $y\in Y_F$ a distribution $\xi_{F,y}$ on $X_{F,y}$  be given, forming a $\cCexp$-family of distributions on $X$ over $Y$.
Then there exist $M'$ and finitely many definable sets $W_i\subset X$, such that $W_{i,F,y}$  is an analytic submanifold of $X_{F,y}$ for each $F\in \Loc'_{M'}$, each $i$, and each $y\in Y_F$, and such that the wave front set of $\xi_{F,y}$ is contained in
$$
\cup_i CN_{W_{i,F,y}}^{X_{F,y}}.
$$
Hence, $\xi_{F,y}$ is algebraically WF-holonomic for each $F$ in $\Loc'_{M'}$ (see definition \ref{def:WF-hol}).
%Furthermore, there exists
%If moreover there are for each $F,y$ smooth locally closed algebraic varieties $\hat X_{F,y}$ such that $\hat X_{F,y}(F)$ equals $X_{F,y}$, then the $W_i$ can be taken so that moreover each $W_{i,F,y}$ equals the set of $F$-rational points on a smooth locally closed algebraic subvariety $\hat W_{i,F,y}$ of $\hat X_{F,y}$ and hence, $\xi_{F,y}$ is algebraically WF-holonomic.
%Furthermore, if there are no $y$-variables involved, then the $\hat W_{i,F}$ can moreover be chosen to be defined over $\QQ$.
%Moreover, if $Y=\{0\}$ and if there is a smooth algebraic manifold $X_0\subset \AA_F^n$ such that $X_F=X_0(F)$ for each $F$ in $\Loc'_{M'}$, then $\xi_F$ is algebraically WF-holonomic for each $F$ in $\Loc'_{M'}$.
\end{theorem}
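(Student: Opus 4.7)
My plan is to combine Theorem \ref{thm:main}, applied fiberwise in characteristic zero, with the uniform description of wave front sets of $\cCexp$-class distributions from \cite{CHLR}, and then transfer the conclusion to positive characteristic. The starting observation is that, by Theorem 3.4.1 of \cite{CHLR} together with Remark 4.3.3 and the family framework of Section 3 of \cite{CHLR}, the complement of the $F^\times$-wave front set of a distribution in $\cS^*_{\cCexp/Y}(X)$ equals the zero locus of a $\cCexp$-class function, uniformly across fibers. In particular, there is a definable set $\Xi\subset X\times(\VF^n\smallsetminus\{0\})$, invariant under the $\VF^\times$-action on the second factor, whose fiber at $(F,y)$ equals $\WF(\xi_{F,y})$ for all $F\in\Loc'_{M_0}$ and $y\in Y_F$ with some $M_0\geq M$.

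I would then restrict first to those $F\in\Loc'_{M_0}$ of characteristic zero. For each such $F$ and each $y\in Y_F$, Theorem \ref{thm:main} applied to $\cL=\cL^F$ implies that $\xi_{F,y}$ is algebraically WF-holonomic. Hence $\Xi_{F,y}$ is covered by finitely many co-normal bundles of $\cL^F$-submanifolds of $X_{F,y}$. The task is to package these fiberwise witnesses into a single uniformly definable finite family. This is done by applying cell decomposition in the generalized Denef-Pas language to $\Xi$ (as a definable family over $Y\times\Loc'_{M_0}$) together with Proposition 4.3.1 of \cite{CHLR} with the amendment in Remark \ref{rem:amend:4.3.1}: the cells naturally produce a finite list $W_1,\ldots,W_k\subset X$ of definable candidates with analytic-manifold fibers, and the fiberwise algebraic holonomicity ensures that the union of co-normal bundles $\cup_i CN_{W_{i,F,y}}^{X_{F,y}}$ contains $\Xi_{F,y}$ for all characteristic-zero $(F,y)$.

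The transfer to positive characteristic is then routine. The containment of $\Xi$ in $\cup_i CN^X_{W_i}$ can be rephrased, using the $\cCexp$-class description of $\Xi$ from \cite{CHLR} and the definability of each $CN^X_{W_i}$, as the vanishing of a single $\cCexp$-class function on $X\times(\VF^n\smallsetminus\{0\})$ outside that union. By the $\cCexp$-class transfer principle (see Section 3.1 of \cite{CHLR}, based on \cite{CLexp}), this vanishing transfers from fields of characteristic zero to all $F\in\Loc'_{M'}$ of positive characteristic at least $M'$, for some $M'\geq M_0$. This yields the uniform conclusion, and the algebraic WF-holonomicity for each fiber follows since the $W_i$ are semi-algebraic (being $\cL^F$-definable when $\cL^F$ is ambient, or in general coming from the algebraic part of the Denef-Pas language).

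The main obstacle is the uniform extraction step in the second paragraph: one must ensure that the finitely many submanifold witnesses furnished fiberwise by Theorem \ref{thm:main} can be chosen uniformly in $(F,y)$, with no dependence of their cardinality or of their defining formulas on the parameters. The key leverage for this is the uniform definability of $\Xi$ itself, so that a family cell decomposition over the Denef-Pas structure, combined with Proposition 4.3.1 of \cite{CHLR}, automatically yields a finite, uniformly definable collection of candidates that is then verified fiberwise to cover the wave front set.
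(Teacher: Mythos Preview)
Your proposal is correct and takes essentially the same approach as the paper: invoke Theorem~\ref{thm:main} fiberwise in characteristic zero, then use Section~4 of \cite{CHLR} (in particular Proposition~4.3.1, with the smoothness amendment of Remark~\ref{rem:amend:4.3.1} handled by reducing to graphs as in the $\cL=\cL^F$ proof) to obtain the uniform definable $W_i$ and the positive-characteristic case. The paper's proof is a one-line citation of exactly these three ingredients; your explicit invocation of the definable $\Xi$ and the transfer principle simply unpacks what Proposition~4.3.1 of \cite{CHLR} already handles internally, though note that the $W_i$ are not produced by cell-decomposing $\Xi$ (whose cells live in the cotangent direction rather than in $X$) but rather directly by that proposition.
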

\begin{proof}
This follows from our holonomicity result (Theorem \ref{thm:main}), Section 4 of \cite{CHLR}, and Remark \ref{rem:amend:4.3.1} used similarly as in the Proof of Theorem \ref{thm:main} for $\cL=\cL^F$ .  %The final sentence follows from the reasoning for Theorem 4.4.3 of \cite{CHallp}.
\end{proof}
For $W_{i,F}$ as in Theorem \ref{thm:main:fam} but with $Y=\{0\}$, see the appendix of \cite{CGH5} to get the extra information that the Zariski closures of the $W_{i,F}$ in $\AA_F^n$ are defined over a number field $F_0$ independently from $F$, in the case that the initial data are defined over $F_0$ as well.

%Let us take the occasion to specify that the existence of $X_{0}$ and of $\cX$ in Proposition 4.3.3 of \cite{CHLR} should be taken to be after a strict $C^1$ transformation of the $W_{F,y}$, which exist by Proposition \ref{prop:isom} below. Indeed, otherwise the smoothness of $X_0$ and $\cX$ in Proposition 4.3.3 of \cite{CHLR} cannot be guaranteed.   %extra assumption that $X_{0}$ is smooth, which is satisfied for us by taking $X_{0}=X$)

%Isomorphisms as in (\ref{t:h2}) exist uniformly, as follows.
%\begin{prop}\label{prop:isom}
%\end{prop}

The following is a family version, uniform in the local field and in definable families, of our regularization Theorem \ref{thm:reg:F:one}. %\ref{thm:reg:intro}, \ref{thm:reg:F}, and \ref{thm:sec:F}.

\begin{theorem}[Uniform regularization]\label{thm:reg:fam}
Let $Y$ and $U\subset X\subset Y\times \VF^n$ be definable sets. Suppose that $X_{F,y}$  is an analytic submanifold of $F^n$ and that $U_{F,y}$ is a nonempty open of $X_{F,y}$ for each $F\in \Loc'_M$ for some $M$ and each $y\in Y_F$.
%Let for each $F\in \Loc'_M$ and each $y\in Y_F$ a distribution $\xi_{F,y}$ on $U_{F,y}$ be given, forming a $\cCexp$-family of distributions.
Then the restriction map
$$
\cS^*_{\cCexp/Y}(X)\to \cS^*_{\cCexp/Y}(U)
$$
admits a  linear section (in particular it is onto).
\end{theorem}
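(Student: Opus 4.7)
The strategy is to repeat the proof of Theorem \ref{thm:reg:F:one} verbatim, but performing every geometric construction in the definable family setting (over $Y$ and uniformly in $F \in \Loc'_M$). All the ingredients used for the fixed-field case have family analogues: the definable Urysohn lemma (Lemma \ref{lem:urysohn}) and the partition of unity (Proposition \ref{prop:part}) work uniformly because the maximal balls $B_x$ used there are defined by first-order conditions on $x$ (and on the parameter $y$), the ``graphs of definable analytic functions'' result (Lemma \ref{lem:imp}) is proved via cell decomposition and definable sections which are already uniform in $F$ and $y$, and the frontier-style stratification of $Z_{F,y} = X_{F,y} \smallsetminus U_{F,y}$ into analytic submanifolds is provided by the uniform version of \cite[Proposition 1.5.3]{CCL} (invoked over the parameter space $Y$ and over $\Loc_M$, possibly enlarging $M$).

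Concretely, the plan is as follows. First, I would reduce to the case where $Z := X \smallsetminus U$ is a closed definable family of analytic submanifolds, by stratifying $Z$ over $Y$ into finitely many locally closed definable pieces $Z_i \subset Y \times \VF^n$ such that each fiber $Z_{i,F,y}$ is an analytic submanifold of $X_{F,y}$ and such that each $Z_i$ is closed in $X \smallsetminus \bigcup_{j<i} Z_j$; the restriction maps then compose, so it suffices to extend across one such stratum at a time. Second, I would use the family version of Lemma \ref{lem:imp} to cover each stratum $Z_i$ by finitely many definable open subsets whose fibers are graphs of $\cL$-analytic functions from open subsets of coordinate subspaces, then apply the uniform partition of unity (Proposition \ref{prop:part}, which is straightforwardly uniform since the $B_x$ constructed in its proof are first-order definable) to reduce to the case where $Z$ is a uniform family of graphs.

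In this graph case I would define the family section $\nu: \cS_{\cCexp/Y}(Z) \to \cS_{\cCexp/Y}(X)$ exactly as in Case~1 of the proof of Proposition \ref{thm:sec:F:one}: for $(y,x) \in X$, set
\[
\nu(\varphi)_{F}(y,x) \;=\; \varphi_{F}\bigl(y,\,p(x),\,\phi_{F,y}(p(x))\bigr)\cdot \11_{B_{p(x),F,y}}(x),
\]
where $\phi$ is the defining analytic map of the graph family $Z$, $p$ is the coordinate projection, and $B_{z,F,y}$ is the maximal ball of radius at most $1$ around $(z,\phi_{F,y}(z))$ contained in $X_{F,y}$. The map $(y,x,r) \mapsto (p(x),\phi_{F,y}(p(x)),\mathbf{1}_{B_{p(x),F,y}}(x))$ is definable uniformly in $F$ and $y$, hence pullback along it preserves the $\cCexp$ class. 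Passing from a section of Schwartz--Bruhat function spaces to a section of distribution spaces is then done as in Case~1 of the proof of Theorem \ref{thm:reg:F:one}, using the uniform version of Proposition \ref{prop:Julia} (which is precisely the statement that pairing a $\cCexp$-family of distributions with a $\cCexp$-family of Schwartz--Bruhat functions yields a $\cCexp$-function; this is the content of the original Proposition \ref{prop:Julia} since its inputs are already formulated uniformly).

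The only potentially delicate point is to make sure that the stratification of $Z$ and the cover by graph-charts can be obtained with a uniform bound on the number of pieces, independently of $F$ and $y$. This uniformity is precisely what the definable framework of Section~3.1 of \cite{CHLR} is designed to guarantee (cell decomposition and its corollaries come with uniform piece-counts), and therefore the whole induction on the dimension of $Z$ closes uniformly, yielding the desired linear section.
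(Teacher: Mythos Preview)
Your proposal is correct and follows essentially the same approach as the paper: the paper's proof simply states that the proofs of Theorem~\ref{thm:reg:F:one} and Proposition~\ref{thm:sec:F:one} work uniformly in $F$ and in $y\in Y_F$, using Lemma~\ref{lem:unif:imp} (the uniform family version of Lemma~\ref{lem:imp}) in place of Lemma~\ref{lem:imp}. Your write-up spells out in detail exactly this reduction, including the uniform graph-chart cover and the explicit formula for the section $\nu$, so it is a faithful (and more explicit) rendering of the paper's one-line argument.
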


Let us also adapt Proposition \ref{thm:sec:F:one} to the uniform setting.
\begin{prop}\label{thm:sec:fam}
Let $Y$ and $Z\subset X\subset Y\times \VF^n$ be definable sets. Suppose that $X_{F,y}$ is an analytic manifold and that $Z_{F,y}$ is a closed analytic submanifold of $X_{F,y}$ for each $F\in \Loc'_M$ and each $y\in Y_F$, of lower dimension than the dimension of $X_{F,y}$.
Then the restriction map
$$
\cS_{\cCexp/Y}(X)\to \mathcal \cS_{\cCexp/Y}(Z)
$$
admits a linear section. %Moreover for definable family in $f_\lambda \in\mathcal \Sc_{\cC^{\rm exp}}(Z)$  the family $s(f_\lambda)$ is also definable. Even uniformly so in the local field, and similarly for $\cCexp_{{\rm an}}$.
\end{prop}

We also give the following, uniform (partial) converse to Theorem 3.4.1 \cite{CHLR}.

\begin{theorem}[Uniform correspondence of loci]\label{prop:conv:unif}
Let $Y$ and $X\subset Y\times \VF^n$ be definable sets. Suppose that $X_{F,y}$ is an analytic manifold for each $F\in \Loc'_M$ and each $y\in Y_F$.
Let $g$ be in $\cCexp(X)$. Suppose that for each $F\in \Loc'_M$ and each $y\in Y_F$ the zero locus of $g_{F,y}$ is dense open in $X_{F,y}$.
Then there exist $M'$ and $\xi$ in $\cS^*_{\cCexp / Y}(X)$ such that for each $F\in \Loc'_{M'}$ and each $y\in Y_F$ the zero locus of $g_{F,y}$ equals the smooth locus of $\xi_{F,y}$.
\end{theorem}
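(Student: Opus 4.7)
The plan is to mimic the proof of Theorem \ref{prop:conv} in the uniform setting, replacing each ingredient with its family/uniform-in-$F$ analogue. First, I would establish a uniform version of Proposition \ref{prop:int-loc}: given $g \in \cCexp(X)$, construct a uniform definable function $\alpha: X \to \ZZ$ (namely the max of $0$ and the sum of the orders of the $F$-valued definable constituents $t_i$ where $0<|t_i|\leq 1$), and set $h(x) = q_F^{-\alpha(x)} g(x)$. This $h$ is again of $\cCexp$-class by construction, the replacement is uniform in $F$ and in the parameter $y \in Y$, and $h_{F,y}$ has the same zero locus as $g_{F,y}$ while being bounded on each fiber. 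This reduction lets us assume $|g_{F,y}(x)|_{\CC}$ is uniformly bounded on $X_{F,y}$.

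Next, I would establish a uniform version of Proposition \ref{prop:loc:const}. Using the family cell-decomposition / stratification machinery from \cite{CCL} (which produces definable partitions $X = \bigsqcup_i D_i$ uniformly in the local field, with each $D_{i,F,y}$ an analytic submanifold of $X_{F,y}$) together with the uniform version of Theorem \ref{thm:gen.sm} available through Section 4 of \cite{CHLR}, one obtains, possibly after enlarging $M$, definable pieces $D_i \subset X$ such that $g|_{D_i}$ restricts fiber-wise to a locally constant function on $D_{i,F,y}$ for each $F \in \Loc'_{M'}$ and $y \in Y_F$. Induction on fiber dimension (which is uniformly bounded by the definability of dimension) closes this step.

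Then I would construct the distribution. For each $i$, Section 2.3 of \cite{CHLR} provides a canonical smooth measure $\mu_{D_{i,F,y}}$ on the analytic submanifold $D_{i,F,y}$ in a way that assembles into a $\cCexp$-family. Define
\[
\xi_{F,y} \;:=\; \sum_i g_{F,y}\,\mu_{D_{i,F,y}},
\]
which yields $\xi \in \cS^*_{\cCexp / Y}(X)$ by the stability of the $\cCexp$-class under multiplication and under pushforward of smooth measures along the inclusions $D_{i,F,y} \hookrightarrow X_{F,y}$, and by Proposition \ref{prop:Julia} applied fiber-wise. By boundedness of $g$, the sum is finite and the resulting $B$-function is a $\cCexp$-class function of $(y, x, r)$.

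Finally, one checks the smooth-locus identity fiber-wise exactly as in the proof of Theorem \ref{prop:conv}: on the zero locus of $g_{F,y}$ (which is open by assumption), $\xi_{F,y}$ vanishes and hence is smooth; conversely, at a point of the complement, there is a piece $D_{i,F,y}$ on which $g$ is nonzero and locally constant and whose restriction gives a nonzero smooth measure on a submanifold of positive codimension (since the complement of the zero locus has empty interior), so $\xi_{F,y}$ is not smooth at such a point. The main obstacle is arranging the partition and the assignment $y \mapsto \mu_{D_{i,F,y}}$ to live uniformly in a single $\cCexp$-family; once the uniform cell decomposition and the canonical-measure framework of \cite{CHLR} are invoked, the rest is a fiber-wise verification identical to the non-uniform case.
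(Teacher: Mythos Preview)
Your proposal is correct and follows essentially the same approach as the paper: the paper's proof of Theorem \ref{prop:conv:unif} consists of the single sentence that the proof of Theorem \ref{prop:conv} and the statement and proof of Proposition \ref{prop:int-loc} adapt naturally to the uniform case, and you have spelled out precisely this adaptation (uniform boundedness replacement via $q_F^{-\alpha(x)}$, uniform stratification into locally constant pieces, and the sum $\sum_i g\,\mu_{D_i}$ of canonical measures). Your final fiber-wise verification of the smooth-locus identity is more detailed than what either proof of Theorem \ref{prop:conv} or \ref{prop:conv:unif} actually writes down.
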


Most proofs above directly apply to the uniform setting. Let us show how to adapt the statement of Lemma \ref{lem:imp}. It is important  %(e.g. the finiteness assumption in Lemma \ref{lem:imp} and Proposition \ref{prop:part}
that the occurring sets in the covers form moreover a definable family, in order to generalize the proof techniques for fixed $F$ above to our uniform setting. Recall from Section 3.1 of \cite{CHLR} that $\RF_{N,F}$ stands for the finite ring $\cO_F/N\cM_F$ for $F$ a local field.
%Note that we don't claim any uniform bound on the sizes of the covers in the following variant of Lemma \ref{lem:imp}.

\begin{lem}\label{lem:unif:imp}\label{lem:cov-4.3.1}
Let $Y$ and $X\subset Y\times \VF^n$ be definable sets. Suppose that $X_{F,y}$ is an analytic submanifold of $F^n$ of dimension $m$ for each $F\in \Loc'_M$ and each $y\in Y_F$. Then there is $N$ and a definable bijection  $\sigma: X \to \sigma(X) \subset X\times \RF_N^k$ for some $k$ which makes a commutative diagram with the projection $p:\sigma(X) \to X$ and such that each nonempty fiber of the projection $\sigma(X)\to Y\times \RF_N^k$ is an open of $X$ which equals the graph from an open subset $V_i \subset L_i$ of a linear subspace of $\VF^n$ of dimension $m$ to a linear complement of this subspace.
\end{lem}
\begin{proof}
This follows by a reasoning as for Lemma \ref{lem:imp}. %see for example %raf
\end{proof}

\begin{proof}[Proofs of Theorem \ref{thm:reg:fam} and proposition \ref{thm:sec:fam}]
Clearly the proofs of Theorems \ref{thm:reg:F:one}  and \ref{thm:sec:F:one} work uniformly in $F$ and in $y\in Y_F$, using Lemma \ref{lem:unif:imp} instead of Lemma \ref{lem:imp}.
\end{proof}

\begin{proof}[Proofs of Theorem \ref{prop:conv:unif}]
The proof of Theorem \ref{prop:conv} and (the statement and the proof of) Proposition \ref{prop:int-loc} adapt naturally to the uniform case.
\end{proof}

%We now give our correction to Proposition 4.3.1 of \cite{CHLR}, whose proof in fact shows precisely this.
%
%\begin{prop}
%
%\end{prop}
%
\begin{remark}\label{rem:hensel}
We leave it to the reader to implement the uniform results also in the analytic framework, using Remark 4.3.3 of \cite{CHLR} and our results above for $\cLan$ for fixed $F$. Similarly we let the (definable) strict $C^1$ analogues to the reader. Let us also note that within the axiomatic framework of hensel-minimality from \cite{CHR} for languages on local fields, one can investigate a  generalization of the results of this paper to that framework (with definable strict $C^1$ manifolds, sets and functions throughout).
\end{remark}

%\begin{remark}\label{rem:amend:4.3.1}
%This remark amends Proposition 4.3.1 of \cite{CHLR}.
%%The above Definition \ref{def:WF-hol} above slightly generalizes Definition 3.2.1 of \cite{AizDr}, by not assuming smoothness and by also allowing positive characteristic.
%%It is in this more general sense that Proposition 4.3.1 of \cite{CHLR} needs to be understood. Indeed, $X_{0,F,y}$ of Proposition 4.3.1 of \cite{CHLR} should be taken to be always be taken to be a smooth algebraic variety, unless
%In Proposition 4.3.1 of \cite{CHLR}, one should first perform a change of variables of the manifolds $W_{F,y}$ using Lemma \ref{lem:cov-4.3.1}, in order to guarantee the existence of smooth $X_{0,F,y}$. Indeed, for a general $\cL^F$-definable strict $C^1$ manifold $X$, there may not be a locally closed smooth algebraic variety $\cX$ with $X\subset \cX(F)$ and with $\dim \cX =\dim X$. (The smoothness may fail.)
%\end{remark}

\bibliographystyle{amsplain}
\bibliography{anbib}
%\begin{thebibliography}{MMMM}
%\end{thebibliography}
\end{document}